%


\documentclass[bjps]{imsart}

\usepackage{amsfonts,amsmath,latexsym,amssymb,mathrsfs,amsthm}



\startlocaldefs

\def\numberlikeadb{\global\def\theequation{\thesection.\arabic{equation}}}
\numberlikeadb
\newtheorem{theorem}{Theorem}[section]
\newtheorem{lemma}[theorem]{Lemma}
\newtheorem{corollary}[theorem]{Corollary}

\newtheorem{proposition}[theorem]{Proposition}
\newtheorem{remark}[theorem]{Remark}

\numberwithin{equation}{section}

\endlocaldefs

\begin{document}

\begin{frontmatter}

\title{Products of normal, beta and gamma random variables: Stein operators and distributional theory}

\runtitle{Products of normal, beta and gamma random variables}

\begin{aug}
 \author{\fnms{Robert E.} \snm{Gaunt}\thanksref{a,b}\corref{Robert E. Gaunt}\ead[label=e1]{robert.gaunt@manchester.ac.uk}\ead[label=e2,url]{www.foo.com}}


\affiliation[a]{The University of Manchester}
\affiliation[b]{University of Oxford}

\address[a]{\printead{e1}}

\runauthor{R. E. Gaunt}

\end{aug}

\begin{abstract}In this paper, we extend Stein's method to products of independent beta, gamma, generalised gamma and mean zero normal random variables.  In particular, we obtain Stein operators for mixed products of these distributions, which include the classical beta, gamma and normal Stein operators as special cases.  These operators lead us to closed-form expressions involving the Meijer $G$-function for the probability density function and characteristic function of the mixed product of independent beta, gamma and central normal random variables.
\end{abstract}

\begin{keyword}[class=MSC]
\kwd[Primary ]{60F05}
\kwd{60E10}
\end{keyword}

\begin{keyword}
\kwd{Stein's method}
\kwd{normal distribution}
\kwd{beta distribution}
\kwd{gamma distribution}
\kwd{generalised gamma distribution}
\kwd{products of random variables distribution}
\kwd{Meijer $G$-function}
\end{keyword}

\end{frontmatter}

\section{Introduction}
In 1972, Stein \cite{stein} introduced a powerful method for deriving bounds for normal approximation.  The method rests on the following characterisation of the normal distribution: $W\sim N(0,\sigma^2)$ if and only if 
\begin{equation} \label{stein lemma}\mathbb{E}[\mathcal{A}_Zf(W)]=0
\end{equation}
for all real-valued absolutely continuous functions $f$ such that $\mathbb{E}|f'(Z)|<\infty$ for $Z\sim N(0,\sigma^2)$, where the operator $\mathcal{A}_Z$, given by $\mathcal{A}_Zf(x)=\sigma^2f'(x)-xf(x)$, is often referred to as the Stein operator (see, for example, Ley et al$.$ \cite{ley}).  This gives rise to the following inhomogeneous differential equation, known as the Stein equation:
\begin{equation} \label{normal equation} \mathcal{A}_Zf(x)=h(x)-\mathbb{E}h(Z),
\end{equation}
where $Z \sim N(0,\sigma^2)$, and the test function $h$ is real-valued.   For any bounded test function, a solution $f_h$ to (\ref{normal equation}) exists (see Stein \cite{stein2}).  Now, evaluating both sides at any random variable $W$ and taking expectations gives
\begin{equation} \label{expect} \mathbb{E}[\mathcal{A}_Zf_h(W)]=\mathbb{E}h(W)-\mathbb{E}h(Z).
\end{equation}
Thus, the problem of bounding the quantity $\mathbb{E}h(W)-\mathbb{E}h(Z)$ reduces to solving (\ref{normal equation}) and bounding the left-hand side of (\ref{expect}).  This is of interest because there are a number of probability distances (for example, Kolmogorov and Wasserstein) of the form $d_{\mathcal{H}}(\mathcal{L}(W),\mathcal{L}(Z))=\sup_{h\in\mathcal{H}}|\mathbb{E}h(W)-\mathbb{E}h(Z)|$.  Hence
\begin{equation*}d_{\mathcal{H}}(\mathcal{L}(W),\mathcal{L}(Z))\leq \sup_{f\in\mathcal{F}(\mathcal{H})}|\mathbb{E}[\mathcal{A}_Zf(W)]|,
\end{equation*}
where $\mathcal{F}(\mathcal{H})=\{f_h\,:\,h\in\mathcal{H}\}$ is the collection of solutions to (\ref{normal equation}) for functions $h\in\mathcal{H}$.  
This basic approach applies equally well to non-normal limits, although different Stein operators are needed for different limit distributions.  For a nice account of the general approach, we refer the reader to Ley et al$.$ \cite{ley}.

Over the years, Stein's method has been adapted to many other distributions, such as the Poisson \cite{chen 0}, exponential \cite{chatterjee}, \cite{pekoz1}, gamma \cite{gaunt chi square} \cite{luk}, \cite{nourdin1} and beta \cite{dobler beta}, \cite{goldstein4}.  The first step in extending Stein's method to a new probability distribution is to obtain a Stein equation.  For the $\mathrm{Beta}(a,b)$ distribution with density $\frac{1}{B(a,b)}x^{a-1}(1-x)^{b-1}$, $0<x<1$, where $B(a,b)=\Gamma(a)\Gamma(b)/\Gamma(a+b)$ is the beta function, a Stein operator  commonly used in the literature (see \cite{dobler beta}, \cite{goldstein4} and \cite{schoutens})  is
\begin{equation}\label{beta equation}\mathcal{A}_{\mathrm{beta}}f(x)=x(1-x)f'(x)+(a-(a+b)x)f(x).
\end{equation}
For the $\Gamma(r,\lambda)$ distribution with density $\frac{\lambda^r}{\Gamma(r)}x^{r-1}\mathrm{e}^{-\lambda x}$, $x>0$, the Stein operator
\begin{equation}\label{gamma equation}\mathcal{A}_{\mathrm{gamma}}f(x)=xf'(x)+(r-\lambda x)f(x)
\end{equation}
is often used in the literature (see \cite{diaconis} and \cite{luk}).  In this paper, we extend Stein's method to products of independent beta, gamma, generalised gamma and central normal random variables.  In particular, we obtain natural generalisations of the operators (\ref{normal equation}), (\ref{beta equation}) and (\ref{gamma equation}) to products of such random variables.

\subsection{Products of independent normal, beta and gamma random variables}

The theory of products of independent random variables is far less well-developed than that for sums of independent random variables, despite appearing naturally in a various applications, such as the limits in a number of random graph and urn models (Hermann and Pfaffelhuber \cite{hp14} and Pek\"oz et al$.$ \cite{pekoz}).  However, fundamental methods for the derivation of the probability density function of products of independent random variables have been developed by Springer and Thompson \cite{springer66}.  Using the Mellin integral transform (as suggested by Epstein \cite{epstein}), the authors obtained explicit formulas for products of independent Cauchy and mean-zero normal variables, and some special cases of beta variables.  Building on this work, Springer and Thompson \cite{springer} showed that the p.d.f.s of the mixed product of mutually independent beta and gamma variables, and the products of independent central normal variables are Meijer $G$-functions (defined in Appendix B).  

The p.d.f$.$ of the product $Z=Z_1Z_2\cdots Z_N$ of
independent normal random variables $Z_i\sim N(0, \sigma_{i}^2)$, $i= 1,2,..., N$, is given by
\begin{equation}\label{MeijerN} p(x)=\frac{1}{(2\pi)^{N/2}\sigma}G_{0,N}^{N,0}\bigg(\frac{x^2}{2^N\sigma^2} \; \bigg| \;0\bigg), \quad x\in\mathbb{R},
\end{equation}
where $\sigma=\sigma_{1}\sigma_{2}\cdots\sigma_{N}$.  If $Z$ has density (\ref{MeijerN}), we say $Z$ has a \emph{product normal} distribution, and write $Z\sim \mathrm{PN}(N,\sigma^2)$.  The density of the product $X_1\cdots X_mY_1\cdots Y_n$, where $X_i\sim \mathrm{Beta}(a_i,b_i)$ and $Y_j\sim\Gamma(r_j,\lambda)$ and the $X_i$ and $Y_j$ are mutually independent, is, for $x>0$, given by
\begin{equation}\label{zxMeijerBC} p(x)=KG_{m,m+n}^{m+n,0}\bigg(\lambda^n x\; \bigg| \; \begin{matrix} a_1+b_1-1,\; a_2+b_2-1,\ldots, a_m+b_m-1 \\
a_1-1,\;a_2-1,\ldots,a_m-1,\;r_1-1,\ldots,r_n-1 \end{matrix} \bigg), 
\end{equation}
where
\[K=\lambda^n\prod_{i=1}^m\frac{\Gamma(a_i+b_i)}{\Gamma(a_i)}\prod_{j=1}^n\frac{1}{\Gamma(r_j)},\]
and we adopt the convention that the empty product is $1$.  A random variable with density (\ref{zxMeijerBC}) is said to have a \emph{product beta-gamma} distribution.  If (\ref{zxMeijerBC}) holds with $n=0$, the random variable is said to have a \emph{product beta} distribution, denoted by $\mathrm{PB}(a_1,b_1,\ldots,a_m,b_m)$; if (\ref{zxMeijerBC}) holds with $m=0$, then we call this a \emph{product gamma} distribution, denoted by $\mathrm{PG}(r_1,\ldots,r_m,\lambda)$.  We also say that a product of mutually independent beta, gamma and central normal random variables has a \emph{product beta-gamma-normal} distribution.  

For the product of two normals, (\ref{MeijerN}) simplifies to
\[p(x)=\frac{1}{\pi\sigma_{1}\sigma_{2}}K_0\bigg(\frac{|x|}{\sigma_{1}\sigma_{2}}\bigg), \quad x\in\mathbb{R},\]
where $K_0(x)$ is a modified Bessel function of the second kind (defined in Appendix B).  For the product of two gammas, (\ref{zxMeijerBC}) also simplifies (see Malik \cite{malik}):
\begin{equation*}p(x)=\frac{2\lambda^{r_1+r_2}}{\Gamma(r_1)\Gamma(r_2)}x^{(r_1+r_2)/2-1}K_{r_1-r_2}(2\lambda\sqrt{x}), \quad x>0.
\end{equation*}
Nadarajah and Kotz \cite{nk05} also give a formula, in terms of the Kummer function, for the density of the product of independent beta and gamma random variables.  However, in general, for 3 or more (mixed) products of independent beta, gamma and central normal random variables there are no such simplifications.

Pek\"oz et al$.$ \cite{pekoz3} extended Stein's method to generalised gamma random variables, denoted by $\mathrm{GG}(r,\lambda,q)$, having density
\begin{equation}p(x)=\frac{q\lambda^r}{\Gamma(\frac{r}{q})}x^{r-1}\mathrm{e}^{-(\lambda x)^q}, \quad x>0.
\end{equation} 
For $G\sim \mathrm{GG}(r,\lambda,q)$, we have $\mathbb{E}G^k=\lambda^{-q}\Gamma((r+k)/q)/\Gamma(r/q)$ and in particular $\mathbb{E}G^q=\frac{r}{q\lambda^q}$.    Special cases include $\mathrm{GG}(r,\lambda,1)=\Gamma(r,\lambda)$ and also $\mathrm{GG}(1,(\sqrt{2}\sigma)^{-1},2)=\mathrm{HN}(\sigma^2)$, where $\mathrm{HN}(\sigma^2)$ denotes a half-normal random variable: $|Z|$ where $Z\sim N(0,\sigma^2)$ (see D\"{o}bler \cite{dobler2} for Stein's method for the half normal distribution).  In this paper, we also extend Stein's method to the product of generalised gamma random variables, $\mathrm{GG}(r_i,\lambda,q)$, denoted by $\mathrm{PGG}(r_1,\ldots,r_n,\lambda,q)$.

\subsection{Product distribution Stein operators}

Recently, Gaunt \cite{gaunt pn} extended Stein's method to the product normal distribution, obtaining the following Stein operator for the $\mathrm{PN}(N,\sigma^2)$ distribution:
\begin{equation}\label{delta}\mathcal{A}_{Z}f(x)=\sigma^2A_Nf(x)-xf(x),
\end{equation}
where the operator $A_N$ is given by $A_Nf(x)=x^{-1}T^Nf(x)$ and $Tf(x)=xf'(x)$.  The Stein operator (\ref{delta}) is a $N$-th order differential operator that generalises the normal Stein operator (\ref{normal equation}) in a natural manner to products.  Such Stein operators are uncommon in the literature with the only other example being the $N$-th order operators of Goldstein and Reinert \cite{goldstein3}, involving orthogonal polynomials, for the normal distribution.  Very recently, Arras et al$.$ \cite{aaps16} have obtained a $N$-th order Stein operator for the distribution of a linear combination of $N$ independent gammas random variables, although their Fourier approach is very different to ours.  Also, in recent years, second order operators involving $f$, $f'$ and $f''$ have appeared in the literature for the Laplace \cite{pike}, variance-gamma distributions \cite{eichelsbacher}, \cite{gaunt vg}, generalized hyperbolic distributions \cite{gaunt gh} and the PRR family of \cite{pekoz}.

One of the main contributions of this paper is an extension of the product normal Stein operator (\ref{delta}) to mixed products of beta, gamma and normal random variables (see Propositions \ref{zxgammachara11}, \ref{prodbetaprop} and \ref{listprop}).  The Stein operators for these product distributions (given in Table 1) are higher order differential operators, which cannot be readily be obtained via standard methods, such as the generator method of Barbour \cite{barbour2} and G\"{o}tze \cite{gotze} and the density method of Stein et al$.$ \cite{stein3}.    

To obtain our product Stein equations, we use a conditioning argument to develop an algebra of Stein operators.  In our proofs, we shall use the differential operators  $T_rf(x)=xf'(x)+rf(x)$ and $B_{r_1,\ldots,r_n}f(x)=T_{r_n}\ldots T_{r_1}f(x)$ (note that $T_0\equiv T$).  It should be noted that whilst we restrict our attention to mixed products of betas, gammas and normals, we expect that the proofs techniques employed in this paper could also be applied to obtain Stein operators for independent products of a number of standard distributions.   

\begin{table}[ht]
\caption{Stein operators for product distributions.  $X\sim \mathrm{PB}(a_1,b_1\ldots,a_m,b_m)$, $Y\sim \mathrm{PG}(r_1,\ldots,r_n,\lambda)$ and $Z\sim \mathrm{PN}(N,\sigma^2)$ are mutually independent.} 
\centering
\begin{tabular}{c l l}
\hline 
Product $P$ & Stein operator $\mathcal{A}_Pf(x)$ & Order\\ [0.5ex]
\hline
$X$ & $B_{a_1,\ldots,a_m}f(x)-xB_{a_1+b_1,\ldots,a_m+b_m}f(x)$ & $m$ \\ [1.5ex]
$Y$ & $B_{r_1,\ldots,r_n}f(x)-\lambda^nx f(x)$ & $n$ \\ [1.5ex]
$Z$ & $\sigma^2A_Nf(x)-xf(x)$ &$N$ \\ [1.5ex]
$XY$ & $B_{a_1,\ldots,a_{m}}B_{r_1,\ldots,r_n}f(x)-\lambda^nxB_{a_1+b_1,\ldots,a_m+b_m}f(x)$ & $m+n$ \\ [1.5ex]
$XZ$ & $\sigma^2B_{a_1,\ldots,a_m}A_NB_{a_1,\ldots,a_m}f(x)$ &$2m+N$ \\ [1.5ex]
&$\quad-xB_{a_1+b_1,\ldots,a_m+b_m}B_{a_1+b_1-1,\ldots,a_m+b_m-1}f(x)$  \\ [1.5ex]
$YZ$ & $\sigma^2B_{r_1,\ldots,r_n}A_NB_{r_1,\ldots,r_n}f(x)-\lambda^{2n}xf(x)$ & $2n+N$ \\ [1.5ex]
$XYZ$ & $\sigma^2B_{a_1,\ldots,a_m}B_{r_1,\ldots,r_n}A_NB_{r_1,\ldots,r_n}B_{a_1,\ldots,a_m}f(x)$ & $2m+2n+N$ \\ [1.5ex]
& $\quad-\lambda^{2n}xB_{a_1+b_1,\ldots,a_m+b_m}B_{a_1+b_1-1,\ldots,a_m+b_m-1}f(x)$ \\  [1ex]

\hline
\end{tabular}
\label{table:nonlin}
\end{table}

It can be seen that the product beta and product gamma Stein operators reduce to the classical beta and gamma Stein operators when $m=1$ and $n=1$, respectively, as was so in the normal case.  In Section 2.2.2, we see that for certain parameter values the Stein operators for the products $XZ$ and $XYZ$ can be simplified to differential operators of lower order.  We give a precise criteria under which this occurs. 

In Proposition \ref{zxgammachara11}, we also obtain a operator for the generalised gamma distribution which leads to the following $\mathrm{PGG}(r_1,\ldots,r_n,\lambda,q)$ Stein operator:
\begin{equation}\label{gghhll}\mathcal{A}_{\mathrm{PGG}}f(x)=B_{r_1,\ldots,r_n}f(x)-(q\lambda^{q})^nx^q f(x).
\end{equation}
Taking $q=1$ in (\ref{gghhll}) yields the product gamma Stein operator $\mathcal{A}_Yf(x)$.  Taking $r_1=\cdots=r_N=1$, $\lambda=(\sqrt{2}\sigma)^{-1}$ and $q=2$ in (\ref{gghhll}) gives the following Stein operator for the product of $N$ independent half-normal random variables ($|Z|$ where $Z\sim\mathrm{PN}(N,\sigma^2)$):
\begin{equation*}\mathcal{A}_{\mathrm{PHN}}f(x)=\sigma^2T_1^Nf(x)-x^2 f(x),
\end{equation*}
where $x$ takes values in the interval $[0,\infty)$.  By allowing $x$ to takes values in $\mathbb{R}$, we obtain the following $\mathrm{PN}(N,\sigma^2)$ Stein operator
\begin{equation*}\tilde{\mathcal{A}}_Zf(x)=\sigma^2T_1^Nf(x)-x^2 f(x),
\end{equation*}
which differs from the $\mathrm{PN}(N,\sigma^2)$ operator (\ref{delta}).  Although, making the changes of variables $g(x)=xf(x)$ we have that $g'(x)=xf'(x)+f(x)$, and so
\begin{equation*}A_Ng(x)=x^{-1}T_0^Ng(x)=T_1^Nf(x),
\end{equation*}
from which we recover the Stein operator (\ref{delta}).

The product distribution Stein operators that are obtained in this paper have a number of interesting properties which are discussed in Remark \ref{nice eqn}.  However, despite their elegance, it is in general difficult to solve the corresponding Stein equation and bound the appropriate derivatives of the solution; further discussion is given in Remark \ref{rem222}.  

The classical normal, beta and gamma Stein equations are first order linear differential equations, and one can obtain uniform bounds for their solutions via elementary calculations.  Uniform bounds are available for the first four derivatives of the solution of the $\mathrm{PN}(2,\sigma^2)$ Stein equation (Gaunt \cite{gaunt pn}), and in Proposition \ref{appendixa2} we show that the $k$-th derivative of the solution of the $\mathrm{PG}(r_1,r_2,\lambda)$ Stein equation is uniformly bounded if the first $k$ derivatives of the test function $h$ are bounded.  Although, for all other cases of product distribution Stein equations we do not have bounds for derivatives of the solution.  

However, in Section 3, we consider a novel application of the product beta-gamma-normal Stein operator.  In Section 3.2, we use the operator to obtain a differential equation that the product beta-gamma-normal p.d.f$.$ must satisfy.  This allows us to `guess' a formula for the density function, which is then easily verified to be the correct formula via Mellin transforms.  This formula is new, and obtaining it directly using the inverse Mellin transform would have required some quite involved calculations.  From our formula we are able to obtain an expression for the characteristic function of the product normal-beta-gamma distribution, as well as estimates for the tail behaviour of the distribution.

\subsection{Outline of the paper}

We begin Section 2 by establishing some properties of the operators $A_N$ and $B_{r_1,\ldots,r_n}$.  We then obtain characterising equations for mixed products of beta, gamma and central normal random variables (Propositions \ref{zxgammachara11}, \ref{prodbetaprop} and \ref{listprop}), which lead to the operators of Table \ref{table:nonlin}.  In Section 2.2.2, we see that for certain parameter values simpler operators can be obtained.  In Section 2.3, we consider a Stein equation for the product of two independent gammas.  We solve the equation and show that the $k$-th derivative of the solution is uniformly bounded if the first $k$ derivatives of the test function $h$ are bounded.  

In Section 3, we obtain formulas for the p.d.f$.$ and characteristic function of the product beta-gamma-normal distribution, as well as an asymptotic formula for the tail behaviour of the distribution.  We use the product beta-gamma-normal Stein operator to propose a candidate formula for the p.d.f$.$ and then verify it using Mellin transforms.  

In Appendix A, we prove some results that were stated in the main text without proof.  Finally, Appendix B lists some basic properties of the Meijer $G$-function and modified Bessel functions that are used in this paper. 

\vspace{3mm}

\emph{Notation.} Throughout this paper, we shall let $T$ denote the operator $Tf(x)=xf'(x)$ and $A_N$ will denote the operator $A_Nf(x)=x^{-1}T^Nf(x)=\frac{\mathrm{d}}{\mathrm{d}x}(T^{N-1}f(x))$.  We also let $T_r$ denote the operator $T_rf(x)=xf'(x)+rf(x)$ and let $B_{r_1,\ldots,r_n}$ denote the operator $B_{r_1,\ldots,r_n}f(x)=T_{r_n}\cdots T_{r_1}f(x)$.  We shall let $C^n(I)$ be the space of functions on the interval $I$ with $n$ continuous derivatives, and $C_b^n(I)$ will denote the space of bounded functions on $I$ with $n$ continuous derivatives that are all bounded.   

\section{Stein operators for products of normal, beta and gamma random variables}

\subsection{Preliminary results}

We begin by presenting some useful properties of the operators $A_N$ and $B_{r_1,\ldots,r_n}$.  

\begin{lemma}\label{opablem}The operators $A_N$ and $B_{r_1,\ldots,r_n}$ have the following properties.

(i) The operators $T_r$ and $T_s$ are commutative, that is, $T_rT_sf(x)=T_sT_rf(x)$ for all $f\in C^2(\mathbb{R})$.  Thus, for all $f\in C^n(\mathbb{R})$, $B_{r_1,\ldots,r_n}f(x)=B_{r_{\sigma(1)},\ldots,r_{\sigma(n)}}f(x)$, where $\sigma$ is a permutation of the set $\{1,2,\ldots,n\}$.  

(ii) For all $f\in C^{n+N}(\mathbb{R})$, the operators $A_N$ and $B_{r_1,\ldots,r_n}$ satisfy
\begin{equation}\label{lefgh}A_NB_{r_1,\ldots,r_n}f(x)=B_{r_1+1,\ldots,r_n+1}A_Nf(x).
\end{equation} 
\end{lemma}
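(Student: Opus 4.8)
The plan is to regard $T_r$ as the perturbation $T_r = T + rI$ of $T$ by a scalar multiple of the identity operator $I$, which reduces both parts to one elementary commutation identity for the derivative.

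For part (i), I would exploit the linearity of $T$ and compute the composition directly:
\[
T_r T_s f = (T + rI)(T + sI)f = T^2 f + (r+s)\,Tf + rs\,f .
\]
Since the right-hand side is manifestly symmetric in $r$ and $s$, the commutativity $T_r T_s f = T_s T_r f$ follows at once for every $f \in C^2(\mathbb{R})$ (two derivatives being exactly what $T^2 f$ requires). The permutation statement is then a purely combinatorial consequence: $S_n$ is generated by adjacent transpositions, so any reordering of the factors in $T_{r_n}\cdots T_{r_1}$ is obtained by successively swapping neighbouring factors, each swap being licensed by the commutativity just proved; the hypothesis $f \in C^n(\mathbb{R})$ guarantees every intermediate expression is well defined.

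For part (ii), the crux is the single index-shifting identity
\[
\frac{\mathrm{d}}{\mathrm{d}x}\, T_r f = T_{r+1}\, \frac{\mathrm{d}}{\mathrm{d}x} f ,
\]
which I would verify in one line from $\frac{\mathrm{d}}{\mathrm{d}x}(xf' + rf) = xf'' + (r+1)f' = T_{r+1}f'$. Writing $D = \frac{\mathrm{d}}{\mathrm{d}x}$, this reads $D T_r = T_{r+1} D$: differentiation advances the subscript by one. I would then combine this with the definition $A_N f = D T^{N-1} f$ and with the fact that $T = T_0$ commutes with every $T_r$ (the special case $s = 0$ of part (i)) to obtain the one-factor relation
\[
A_N T_r = D T^{N-1} T_r = D T_r T^{N-1} = T_{r+1} D T^{N-1} = T_{r+1} A_N .
\]
Identity (\ref{lefgh}) then follows by iterating this $n$ times, pulling $A_N$ successively past $T_{r_n}, T_{r_{n-1}}, \ldots, T_{r_1}$ and raising each subscript by one; the smoothness $f \in C^{n+N}(\mathbb{R})$ is precisely what is needed for the order-$(n+N)$ composition.

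I expect the only genuine obstacle to be spotting the identity $D T_r = T_{r+1} D$; once this is isolated, both parts collapse to short manipulations, and the remaining work is the routine bookkeeping of verifying that the stated differentiability classes are exactly sufficient for every derivative appearing in the intermediate steps and that the inductive peeling-off of the factors $T_{r_i}$ is valid at each stage.
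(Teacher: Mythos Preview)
Your proof is correct and essentially the same as the paper's: both parts hinge on exactly the two identities you isolate, namely the symmetry of $T_rT_s$ (the paper computes $T_rT_sf(x)=x^2f''(x)+(1+r+s)xf'(x)+rsf(x)$ directly, which is your $T^2f+(r+s)Tf+rsf$ written out) and the shift $DT_r=T_{r+1}D$, after which the result follows by commuting $T_0^{N-1}$ past the $T_{r_i}$ and then pulling $D$ through. The only cosmetic difference is that you package the iteration as the single-factor relation $A_NT_r=T_{r+1}A_N$ before applying it $n$ times, whereas the paper carries out the same commutations in one displayed chain.
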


\begin{proof}(i) The first assertion follows since $T_rT_sf(x)=x^2f''(x)+(1+r+s)xf'(x)+rsf(x)=T_sT_rf(x)$, and the second assertion now follows immediately.  

(ii) As $A_1\equiv \frac{\mathrm{d}}{\mathrm{d}x}$, we have $A_1T_rf(x)=xf''(x)+(r+1)f'(x)=T_{r+1}A_1f(x)$.  Thus, on recalling that $A_Nf(x)=\frac{\mathrm{d}}{\mathrm{d}x}(T_0^{N-1}f(x))$ and using the fact that the operators $T_r$ and $T_s$ are commutative, we have
\begin{align*}A_NB_{r_1,\ldots,r_n}f(x)&=A_1T_0^{N-1}T_{r_1}\cdots T_{r_n}f(x)\\
&=A_1T_{r_1}\cdots T_{r_n}T_0^{N-1}f(x)\\
&=T_{r_1+1}A_1T_{r_2}\cdots T_{r_n}T_0^{N-1}f(x)\\
&=T_{r_1+1}\cdots T_{r_n+1}A_1T_0^{N-1}f(x)\\
&=B_{r_1+1,\ldots,r_n+1}A_Nf(x),
\end{align*}
where an iteration was applied to obtain the penultimate equality.
\end{proof}

The following fundamental formulas (Luke \cite{luke}, pp$.$ 24--26) disentangle the iterated operators $A_N$ and $B_{r_1,\ldots,r_n}$.  For $f\in C^n(\mathbb{R})$,
\begin{eqnarray}\label{nthoperator}A_Nf(x)&=&\sum_{k=1}^N{N\brace k}x^{k-1}f^{(k)}(x), \\
\label{zxople}B_{r_1,\ldots,r_n}f(x)&=&\sum_{k=0}^nc_{k,n}x^{k}f^{(k)}(x),
\end{eqnarray}
where ${n\brace k}=\frac{1}{k!}\sum_{j=0}^k(-1)^{k-j}\binom{k}{j}j^n$ are Stirling numbers of the second kind (Olver et al$.$ \cite{olver}, Chapter 26) and
\begin{equation}\label{cknfor}c_{k,m}=\frac{(-1)^k}{k!}\sum_{j=0}^k\frac{(-k)_j}{j!}(j+qa)\prod_{i=1}^{m-1}(j+qr_i),
\end{equation}
for $(a)_j=a(a+1)\cdots(a+j-1)$, $(a)_0=1$.

Applying (\ref{lefgh}) and (\ref{zxople}) gives that, for $f\in C^{m+n+N}(\mathbb{R})$,
\begin{align}B_{a_1,\ldots,a_m}A_NB_{b_1,\ldots,b_n}f(x)&=A_NB_{a_1-1,\ldots,a_m-1}B_{b_1,\ldots,b_n}f(x) \nonumber \\
&=x^{-1}T_0^NB_{a_1-1,\ldots,a_m-1}B_{b_1,\ldots,b_n}f(x) \nonumber \\
\label{abopabop}&=\sum_{k=1}^{m+n+N}\tilde{c}_{k,m+n+N}x^{k-1}f^{(k)}(x),
\end{align}
where the $\tilde{c}_{k,m+n+N}$ can be computed using (\ref{cknfor}).

\subsection{Stein operators}

With the preliminary results stated, we are now in a position to obtain Stein operators for mixed products of beta, gamma and central normal random variables, which give rise to the product distribution Stein operators of Table \ref{table:nonlin}.  From here on we shall suppose that the random variables $X\sim \mathrm{PB}(a_1,b_1\ldots,a_m,b_m)$, $Y\sim \mathrm{PG}(r_1,\ldots,r_n,\lambda)$ and $Z\sim \mathrm{PN}(N,\sigma^2)$ are mutually independent.  We shall also let $\mathcal{A}_Pf(x)$ be the operator for the product distribution $P$, as given in Table \ref{table:nonlin}.

\subsubsection{General parameters}

We firstly consider the case of mixed products of beta, gamma and central normal random variables with general parameter values.  In Section 2.2.2, we look at particular parameter values under which we can obtain some slightly simpler formulas for product distribution Stein operators.  We begin by recalling the product normal Stein operator that was obtained by Gaunt \cite{gaunt pn}.

\begin{proposition}\label{prodsteinlemma}Suppose $Z\sim\mathrm{PN}(N,\sigma^2)$.  Let $f\in C^N(\mathbb{R})$ be such that $\mathbb{E}|Zf(Z)|<\infty$ and $\mathbb{E}|Z^{k-1}f^{(k)}(Z)|<\infty$, $k=1,\ldots,N$.  Then
\begin{equation}\label{cracker} \mathbb{E}[\mathcal{A}_Zf(Z)]=0.
\end{equation}
\end{proposition}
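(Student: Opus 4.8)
The plan is to establish the Stein identity $\mathbb{E}[\mathcal{A}_Zf(Z)] = \mathbb{E}[\sigma^2 A_N f(Z) - Zf(Z)] = 0$ by a conditioning argument that peels off the normal factors one at a time and repeatedly invokes the classical (one-dimensional) normal Stein identity $\mathbb{E}[\sigma^2 g'(W) - W g(W)] = 0$ for $W \sim N(0,\sigma^2)$. First I would write $Z = Z_1 Z_2 \cdots Z_N$ with $Z_i \sim N(0,\sigma_i^2)$ independent and $\sigma = \sigma_1 \cdots \sigma_N$. The governing observation is the scaling identity $A_N f(xy) = y^{-1} A_{N}^{(x)}[\,\cdot\,]$ behaviour of the operator $T f(x) = x f'(x)$: since $T$ commutes with the multiplicative structure, the action of $A_N$ on a product variable can be matched, factor by factor, against the derivative $g'$ appearing in the classical operator. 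Concretely, using $Tf(x) = x f'(x)$, one checks that conditioning on all but one normal factor converts one application of $T$ into the $\sigma^2 g' - W g$ identity for that single normal.

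The key steps, in order, are as follows. First I would record that $A_N f(x) = x^{-1} T^N f(x)$ and that, by the definition of $T$, the quantity $T^N f(Z)$ can be organised so that each of the $N$ factors $Z_i$ is differentiated in turn; the representation (\ref{nthoperator}), $A_N f(x) = \sum_{k=1}^N {N \brace k} x^{k-1} f^{(k)}(x)$, guarantees that $A_N f(Z)$ is a finite linear combination of terms $Z^{k-1} f^{(k)}(Z)$, so the moment hypotheses $\mathbb{E}|Z^{k-1} f^{(k)}(Z)| < \infty$ are exactly what is needed to justify taking expectations and applying Fubini throughout. Second, I would condition on $Z_2, \ldots, Z_N$ and treat $Z_1$ as the active $N(0,\sigma_1^2)$ variable; setting $y = Z_2 \cdots Z_N$, the inner conditional expectation is of the form $\mathbb{E}[\sigma_1^2 \, \partial_{z_1}(\cdots) - z_1(\cdots)]$, which vanishes by the classical normal Stein identity applied to the function $z_1 \mapsto T^{N-1}f(z_1 y)$ (or the appropriate partially-reduced function). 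Third, I would iterate: each successive conditioning removes one factor of $T$ and contributes one factor of $\sigma_i^2$, and after $N$ steps the multiplicative constant $\sigma_1^2 \cdots \sigma_N^2 = \sigma^2$ has been assembled and the whole expression collapses to $\mathbb{E}[\sigma^2 A_N f(Z) - Z f(Z)] = 0$.

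The main obstacle I anticipate is bookkeeping the interplay between the operator $T$ and the conditioning at each stage: because $T f(x) = x f'(x)$ differentiates \emph{after} multiplying by the variable, one must track carefully how the factor $y = Z_2 \cdots Z_N$ enters under the derivative $\partial_{z_1}$, and confirm that the one-variable identity is being applied to the correct auxiliary function at each step. A clean way to manage this is to prove the single-factor reduction first as a lemma — namely that for $W \sim N(0,\sigma_i^2)$ independent of everything else, $\mathbb{E}[\sigma_i^2 \, T_{\text{(on }W)} g(W \cdot) \,] = \mathbb{E}[(W\cdot) g(W\cdot)]$ in the relevant conditional sense — and then apply it $N$ times. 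Alternatively, since the result is attributed to Gaunt \cite{gaunt pn}, one may simply cite that reference; but the conditioning argument above is the natural self-contained route, and the integrability conditions stated in the hypothesis are precisely those required to make every interchange of expectation and differentiation legitimate.
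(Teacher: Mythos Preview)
The paper does not actually prove this proposition: it simply recalls the result from Gaunt \cite{gaunt pn} and states it without argument. Your conditioning-and-induction strategy is nonetheless exactly the method the paper employs for the companion results (Propositions \ref{zxgammachara11} and \ref{prodbetaprop}), so your approach is correct and entirely in the spirit of the paper; indeed, the cleanest realisation of your sketch is to mimic the induction in the proof of Proposition \ref{zxgammachara11}, replacing the $\mathrm{GG}(r,\lambda,q)$ base case by the classical $N(0,\sigma_i^2)$ Stein identity and the operator $T_r$ by $T_0$.
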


We now present Stein operators for the product beta and product generalised gamma distributions; taking $q=1$ gives a product gamma distribution Stein operator.

\begin{proposition}\label{zxgammachara11}Suppose $G\sim\mathrm{PGG}(r_1,\ldots,r_n,\lambda,q)$.  Let $f\in C^n(\mathbb{R})$ be such that $\mathbb{E}|G^qf(G)|<\infty$ and $\mathbb{E}|G^{k}f^{(k)}(G)|<\infty$, $k=0,\ldots,n$, where $f^{(0)}\equiv f$.  Then
\begin{equation}\label{zxcracker11} \mathbb{E}[B_{r_1,\ldots,r_{n}}f(G)-(q\lambda^{q})^nG^qf(G)]=0.
\end{equation}
\end{proposition}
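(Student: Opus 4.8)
The plan is to prove \eqref{zxcracker11} by induction on $n$, using a conditioning argument that reduces each step to the single generalised gamma identity, together with the scale invariance of the operators $T_r$. Throughout I write $G=G_1\cdots G_n$ with $G_i\sim\mathrm{GG}(r_i,\lambda,q)$ independent. For the base case $n=1$, the claim is $\mathbb{E}[G_1 f'(G_1)+r_1 f(G_1)-q\lambda^q G_1^q f(G_1)]=0$, which I would establish directly by integration by parts. Writing the density as $c\,x^{r_1-1}\mathrm{e}^{-(\lambda x)^q}$ with $c=q\lambda^{r_1}/\Gamma(r_1/q)$, I compute $\mathbb{E}[G_1 f'(G_1)]=c\int_0^\infty f'(x)x^{r_1}\mathrm{e}^{-(\lambda x)^q}\,\mathrm{d}x$ and integrate by parts, differentiating $x^{r_1}\mathrm{e}^{-(\lambda x)^q}$. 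Since $\frac{\mathrm{d}}{\mathrm{d}x}\big(x^{r_1}\mathrm{e}^{-(\lambda x)^q}\big)=\big(r_1 x^{r_1-1}-q\lambda^q x^{r_1+q-1}\big)\mathrm{e}^{-(\lambda x)^q}$, this yields $\mathbb{E}[G_1 f'(G_1)]=-r_1\mathbb{E}[f(G_1)]+q\lambda^q\mathbb{E}[G_1^q f(G_1)]$, which rearranges to the claim, provided the boundary term $[f(x)x^{r_1}\mathrm{e}^{-(\lambda x)^q}]_0^\infty$ vanishes.

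The structural fact that drives the induction is that $T_r$ is \emph{scale invariant}: a short calculation shows that for any constant $c>0$ and $g_c(x):=g(cx)$ one has $T_r g_c(x)=(T_r g)(cx)$, since $x g_c'(x)=cx g'(cx)$; iterating gives $B_{r_1,\ldots,r_k}g_c(x)=(B_{r_1,\ldots,r_k}g)(cx)$. For the inductive step, assume \eqref{zxcracker11} holds for products of $n-1$ factors, split $G=G'G_n$ with $G'=G_1\cdots G_{n-1}\sim\mathrm{PGG}(r_1,\ldots,r_{n-1},\lambda,q)$, and set $g=B_{r_1,\ldots,r_{n-1}}f$ so that $B_{r_1,\ldots,r_n}f=T_{r_n}g$. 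First I condition on $G'=w$: then $G=wG_n$, and by scale invariance $(T_{r_n}g)(wG_n)=T_{r_n}g_w(G_n)$ with $g_w(y)=g(wy)$, while $g_w(G_n)=g(G)$. Applying the base case to $g_w$ and $G_n$ and integrating over $w$ gives $\mathbb{E}[T_{r_n}g(G)]=q\lambda^q\mathbb{E}[G_n^q\,g(G)]$. Next I condition on $G_n=y$ in the remaining term: then $G=yG'$, scale invariance gives $g(yG')=B_{r_1,\ldots,r_{n-1}}f_y(G')$ with $f_y(z)=f(yz)$, and the inductive hypothesis applied to $f_y$ and $G'$ yields $\mathbb{E}[B_{r_1,\ldots,r_{n-1}}f_y(G')]=(q\lambda^q)^{n-1}\mathbb{E}[(G')^q f(yG')]$. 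Using $(G')^q=G^q/y^q$ and integrating over $y$ then gives $\mathbb{E}[G_n^q g(G)]=(q\lambda^q)^{n-1}\mathbb{E}[G^q f(G)]$. Combining the two displays produces $\mathbb{E}[B_{r_1,\ldots,r_n}f(G)]=(q\lambda^q)^n\mathbb{E}[G^q f(G)]$, which is \eqref{zxcracker11}.

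The genuine work lies not in the algebra but in the analytic bookkeeping, and I expect this to be the main obstacle. For the base case I would argue that the hypotheses $\mathbb{E}|f(G)|<\infty$, $\mathbb{E}|G f'(G)|<\infty$ and $\mathbb{E}|G^q f(G)|<\infty$ make both $x^{r_1}\mathrm{e}^{-(\lambda x)^q}f(x)$ and its derivative integrable on $(0,\infty)$, which forces the limits at $0$ and $\infty$ to vanish and so kills the boundary term. For the inductive step, the full set of moment conditions $\mathbb{E}|G^k f^{(k)}(G)|<\infty$, $k=0,\ldots,n$, is exactly what is needed (via Fubini) to guarantee that for almost every value of the conditioning variable the scaled functions $g_w$ and $f_y$ satisfy the integrability hypotheses required to invoke the base case and the inductive hypothesis, and that the resulting conditional expectations may legitimately be integrated back over the conditioning variable.
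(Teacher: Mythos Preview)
Your proof is correct and follows essentially the same approach as the paper: induction on $n$ via a conditioning argument exploiting the scale invariance $(T_r f_c)(x)=(T_r f)(cx)$. The only cosmetic differences are that the paper handles the base case by quoting the known $\Gamma(r/q,\lambda)$ Stein identity and pushing it through the change of variables $V=(\lambda^{1-q}U)^{1/q}$ rather than integrating by parts directly, and in the inductive step the paper works from the $G^q f(G)$ side towards $B_{r_1,\ldots,r_n}f$ (conditioning first on the new factor and applying the inductive hypothesis, then on the old product and applying the base case), whereas you proceed in the reverse order and direction.
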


\begin{proof} We proceed by induction on $n$ and begin by proving the base case $n=1$.  The well-known characterisation of the gamma distribution, given in Luk \cite{luk}, states that if $U\sim \Gamma(r/q,\lambda)$, then
\begin{equation}\label{luckeqn}\mathbb{E}[Uf'(U)+(r/q-\lambda U)f(U)]=0
\end{equation}  
for all differentiable functions $f$ such that the expectation exists.  Now, if $V\sim \mathrm{GG}(r,\lambda,q)$, then $V\stackrel{\mathcal{D}}{=}(\lambda^{1-q}U)^{1/q}$.  Making the change of variables $V=(\lambda^{1-q}U)^{1/q}$ in (\ref{luckeqn}) leads to the following characterising equation for the $\mathrm{GG}(r,\lambda,q)$ distribution:
\begin{equation*}\mathbb{E}[Vf'(V)+(r-q\lambda^q V^q)f(V)]=0
\end{equation*} 
for all differentiable functions $f$ such that the expectation exists.  This can be written as $\mathbb{E}[T_rf(V)-q\lambda^qV^qf(V)]=0$, and so the result is true for $n=1$.

Let us now prove the inductive step.  We begin by defining $W_n=\prod_{i=1}^nV_i$  where $V_i\sim\mathrm{GG}(r_i,\lambda,q)$ and the $V_i$ are mutually independent.  We observe that $(T_pf)(a x)=T_pf_{a}(x)$ where $f_{a}(x)=f(a x)$, and so $(B_{p_1,\ldots p_l}f)(a x)=B_{p_1,\ldots p_l}f_{a}(x)$.  By induction assume that $(q\lambda^q)^{n}\mathbb{E}W_n^qg(W_n)=\mathbb{E}B_{r_1,\ldots,r_{n}}g(W_n)$ for all $g\in C^{n}(\mathbb{R})$ for some $n\geq 1$.  Then 
\begin{align*}&(q\lambda^q)^{n+1}\mathbb{E}W_{n+1}^qf(W_{n+1}) \\
&=(q\lambda^q)^{n+1}\mathbb{E}[V_{n+1}^q\mathbb{E}[W_n^qf_{V_{n+1}}(W_n)\mid V_{n+1}]]\\
&= q\lambda^q\mathbb{E}[V_{n+1}^q\mathbb{E}[B_{r_1,\ldots,r_{n}}f_{V_{n+1}}(W_n)\mid V_{n+1}]]\\
&= q\lambda^q\mathbb{E}[V_{n+1}^q(B_{r_1,\ldots,r_{n}}f)(W_nV_{n+1})]\\
&= q\lambda^q\mathbb{E}[\mathbb{E}[V_{n+1}^q (B_{r_1,\ldots,r_{n}}f_{W_n})(V_{n+1})\mid W_n]]\\
&= \mathbb{E}[\mathbb{E}[W_nV_{n+1}(B_{r_1,\ldots,r_{n}}f)'(W_nV_{n+1}) +r_{n+1}f(W_nV_{n+1})\mid W_n]]\\
&= \mathbb{E}B_{r_1,\ldots,r_{n+1}}f(W_{n+1}).
\end{align*} 
Thus, the result has been proved by induction on $n$.
\end{proof}

\begin{proposition}\label{prodbetaprop}Suppose $X\sim\mathrm{PB}(a_1,b_1,\ldots,a_m,b_m)$.  Let $f\in C^m((0,1))$ be such that $\mathbb{E}|X^kf^{(k)}(X)|<\infty$ and $\mathbb{E}|X^{k+1}f^{(k)}(X)|<\infty$, $k=0,\ldots,m$.  Then
\begin{equation}\label{zxcracker11beta} \mathbb{E}[\mathcal{A}_Xf(X)]=0.
\end{equation}
\end{proposition}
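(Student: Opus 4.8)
The plan is to mirror the inductive, conditioning-based argument used for Proposition \ref{zxgammachara11}, since the product beta operator from Table \ref{table:nonlin} has the same two-block structure. Reading off $\mathcal{A}_Xf(x)=B_{a_1,\ldots,a_m}f(x)-xB_{a_1+b_1,\ldots,a_m+b_m}f(x)$, the assertion (\ref{zxcracker11beta}) is equivalent to the identity
\begin{equation*}
\mathbb{E}[B_{a_1,\ldots,a_m}f(X)]=\mathbb{E}[X\,B_{a_1+b_1,\ldots,a_m+b_m}f(X)],
\end{equation*}
which I shall refer to as $(\star)$, and I would prove $(\star)$ by induction on $m$, at each stage allowing $f$ to range over all functions in $C^m((0,1))$ satisfying the stated moment hypotheses. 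Those hypotheses are exactly what is needed: expanding $B_{a_1,\ldots,a_m}$ and $B_{a_1+b_1,\ldots,a_m+b_m}$ via the disentangling formula (\ref{zxople}) shows that every term is of the form $x^kf^{(k)}(x)$ or $x^{k+1}f^{(k)}(x)$ with $k\le m$, so the finiteness of $\mathbb{E}|X^kf^{(k)}(X)|$ and $\mathbb{E}|X^{k+1}f^{(k)}(X)|$ guarantees that all expectations exist and that the iterated conditional expectations below are justified by Fubini's theorem.

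For the base case $m=1$ I would simply note that $B_{a_1}f(x)-xB_{a_1+b_1}f(x)=T_{a_1}f(x)-xT_{a_1+b_1}f(x)$ expands to $x(1-x)f'(x)+(a_1-(a_1+b_1)x)f(x)$, which is precisely the classical beta Stein operator (\ref{beta equation}) with $a=a_1$, $b=b_1$. Hence $(\star)$ for $m=1$ is the standard $\mathrm{Beta}(a_1,b_1)$ characterisation, provable directly by a single integration by parts against the beta density. For the inductive step I would write $P=X_1\cdots X_m$ and $U=X_{m+1}\sim\mathrm{Beta}(a_{m+1},b_{m+1})$, independent, with $X=PU$, and then run the same two conditioning passes as in the gamma proof. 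The two tools driving everything are the scaling relation $(B_{p_1,\ldots,p_l}f)(ax)=B_{p_1,\ldots,p_l}f_a(x)$ (with $f_a(x)=f(ax)$) and the commutativity of the $T_r$ from Lemma \ref{opablem}(i). Starting from $\mathbb{E}[B_{a_1,\ldots,a_{m+1}}f(X)]$, I would condition on $U$, use scaling to pass to a function of $P$, peel off $T_{a_{m+1}}$ by commutativity, and apply the inductive hypothesis $(\star)$ to $P$ with test function $T_{a_{m+1}}f_U$; writing $\chi=B_{a_1+b_1,\ldots,a_m+b_m}f$ and using scaling again reduces this to $\mathbb{E}[P\,(T_{a_{m+1}}\chi)(X)]$.

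The second pass conditions on $P$ and applies the base-case beta characterisation to $U$ with test function $\chi_P(\cdot)=\chi(P\,\cdot)$: this converts $T_{a_{m+1}}$ into $U\,T_{a_{m+1}+b_{m+1}}$, so that $\mathbb{E}[P\,(T_{a_{m+1}}\chi)(X)]=\mathbb{E}[PU\,(T_{a_{m+1}+b_{m+1}}\chi)(X)]=\mathbb{E}[X\,(T_{a_{m+1}+b_{m+1}}\chi)(X)]$, and since $T_{a_{m+1}+b_{m+1}}\chi=B_{a_1+b_1,\ldots,a_{m+1}+b_{m+1}}f$ this is exactly the right-hand side of $(\star)$ for $m+1$. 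The step I expect to be the main obstacle is purely the bookkeeping: keeping straight which operator acts in which variable as one moves between functions of $P$, of $U$, and of $X=PU$, and invoking the scaling relation and Lemma \ref{opablem}(i) in the correct places. The conceptual novelty relative to the gamma case is that the base case must do double duty here, simultaneously raising the index from $a_{m+1}$ to $a_{m+1}+b_{m+1}$ and supplying the extra factor $U$ that combines with $P$ to produce the prefactor $X$ on the right-hand side.
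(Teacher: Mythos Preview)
Your proposal is correct and follows essentially the same inductive, two-pass conditioning argument as the paper's proof: both run induction on $m$, use the scaling relation $(B_{p_1,\ldots,p_l}f)(ax)=B_{p_1,\ldots,p_l}f_a(x)$ together with the commutativity from Lemma \ref{opablem}(i), condition once on the new factor $X_{m+1}$ to invoke the inductive hypothesis on $W_m$, and once on $W_m$ to invoke the $m=1$ beta characterisation on $X_{m+1}$. The only cosmetic difference is that the paper starts from the right-hand side $\mathbb{E}[W_{m+1}B_{a_1+b_1,\ldots,a_{m+1}+b_{m+1}}f(W_{m+1})]$ and works toward $\mathbb{E}[B_{a_1,\ldots,a_{m+1}}f(W_{m+1})]$, whereas you run the chain of equalities in the opposite direction.
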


\begin{proof}The proof is similar to that of Proposition \ref{zxgammachara11}, and we proceed by induction on $m$.  Let $W_m=\prod_{i=1}^mX_i$  where $X_i\sim\mathrm{Beta}(a_i,b_i)$ and the $X_i$ are mutually independent.  The base case of the induction $m=1$ is the well-known characterisation (\ref{beta equation}) of the beta distribution.  By induction assume that $\mathbb{E}W_mB_{a_1+b_1,\ldots,a_m+b_m}g(W_m)=\mathbb{E}B_{a_1,\ldots,a_{m}}g(W_m)$ for all $g\in C^{m}(\mathbb{R})$ for some $m\geq 1$.  Then 
\begin{align*}&\mathbb{E}W_{m+1}B_{a_1+b_1,\ldots,a_{m+1}+b_{m+1}}f(W_{m+1}) \\
&=\mathbb{E}[X_{m+1}\mathbb{E}[W_mB_{a_1+b_1,\ldots,a_{m}+b_{m}}T_{a_{m+1}+b_{m+1}}f_{X_{m+1}}(W_{m})\mid X_{m+1}]]\\
&= \mathbb{E}[X_{m+1}\mathbb{E}[B_{a_1,\ldots,a_{m}}T_{a_{m+1}+b_{m+1}}f_{X_{m+1}}(W_{m})\mid X_{m+1}]]\\
&= \mathbb{E}[X_{m+1}(T_{a_{m+1}+b_{m+1}}B_{a_1,\ldots,a_{m}}f)(W_mX_{m+1})]\\
&= \mathbb{E}[\mathbb{E}[X_{m+1} (T_{a_{m+1}+b_{m+1}}B_{a_1,\ldots,a_{m}}f_{W_m})(X_{m+1})\mid W_m]]\\
&= \mathbb{E}[\mathbb{E}[X_{m+1}W_m(B_{a_1,\ldots,a_{m}}f_{W_m})'(X_{m+1}) +a_{m+1}f(W_mX_{m+1})\mid W_m]]\\
&= \mathbb{E}B_{a_1,\ldots,a_{m+1}}f(W_{m+1}),
\end{align*} 
and so necessity has been proved by induction on $m$.
\end{proof}

We now use the above product beta, gamma and normal Stein operators to obtain Stein operators for mixed products of such random variables.

\begin{proposition}\label{listprop}Let $X\sim \mathrm{PB}(a_1,b_1\ldots,a_m,b_m)$, $Y\sim \mathrm{PG}(r_1,\ldots,r_n,\lambda)$ and $Z\sim \mathrm{PN}(N,\sigma^2)$ be mutually independent. 

(i) Let $f\in C^{m+n}(\mathbb{R}_+)$ be such that $\mathbb{E}|(XY)^jf^{(j)}(XY)|<\infty$, $j=0,\ldots,m+n$, and $\mathbb{E}|(XY)^{k+1}f^{(k)}(XY)|<\infty$, $k=0,\ldots,m$.  Then
\begin{equation*}\mathbb{E}[\mathcal{A}_{XY}f(XY)]=0.
\end{equation*}

(ii) Let $f\in C^{2m+N}(\mathbb{R})$ be such that $\mathbb{E}|(XZ)^{j-1}f^{(j)}(XZ)|<\infty$, $j=1,\ldots,2m+N$, and $\mathbb{E}|(XZ)^{k+1}f^{(k)}(XZ)|<\infty$, $k=0,\ldots,2m$.   Then
\begin{equation}\label{sh1sh2}\mathbb{E}[\mathcal{A}_{XZ}f(XZ)]=0.
\end{equation}

(iii) Let $f\in C^{2n+N}(\mathbb{R})$ be such that $\mathbb{E}|YZf(YZ)|<\infty$ and additionally $\mathbb{E}|(YZ)^{k-1}f^{(k)}(YZ)|<\infty$, $k=1,\ldots,2n+N$.  Then
\begin{equation}\label{folpa}\mathbb{E}[\mathcal{A}_{YZ}f(YZ)]=0.
\end{equation}

(iv) Let $f\in C^{2m+2n+N}(\mathbb{R})$ be such that $\mathbb{E}|(XYZ)^{j-1}f^{(j)}(XYZ)|<\infty$, $j=1,\ldots,2m+2n+N$, and $\mathbb{E}|(XYZ)^{k+1}f^{(k)}(XYZ)|<\infty$, $k=0,\ldots,2m$.   Then
\begin{equation}\label{gamtimesnor}\mathbb{E}[\mathcal{A}_{XYZ}f(XYZ)]=0.
\end{equation}
\end{proposition}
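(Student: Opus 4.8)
The plan is to prove all four parts by the conditioning scheme already used for Propositions \ref{zxgammachara11} and \ref{prodbetaprop}: peel the product apart one independent factor at a time, and at each step invoke the single-type characterisation for that factor together with the operator algebra of Lemma \ref{opablem}. It is convenient to record the three building blocks in rearranged form: the product normal identity (Proposition \ref{prodsteinlemma}) reads $\sigma^2\mathbb{E}[A_Ng(Z)]=\mathbb{E}[Zg(Z)]$; the product gamma identity ($q=1$ in Proposition \ref{zxgammachara11}) reads $\mathbb{E}[B_{r_1,\ldots,r_n}g(Y)]=\lambda^n\mathbb{E}[Yg(Y)]$; and the product beta identity (Proposition \ref{prodbetaprop}) reads $\mathbb{E}[B_{a_1,\ldots,a_m}g(X)]=\mathbb{E}[XB_{a_1+b_1,\ldots,a_m+b_m}g(X)]$.

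Two auxiliary operator facts carry the conditioning through. First, the scaling relations $(T_rg)(ax)=T_rg_a(x)$ and $(A_Ng)(ax)=a^{-1}A_Ng_a(x)$, where $g_a(x)=g(ax)$; these follow from $Tg_a(x)=axg'(ax)$ and let me transfer any operator built from $T_r$'s and a single $A_N$ from the product argument onto the factor being conditioned on, the $A_N$ contributing the extra factor $a^{-1}$. Second, the multiplication--commutation identity $x^{-1}T_{r-1}\phi(x)=T_r(x^{-1}\phi)(x)$, and hence $x^{-1}B_{r_1-1,\ldots,r_n-1}\phi(x)=B_{r_1,\ldots,r_n}(x^{-1}\phi)(x)$, which absorbs the stray $a^{-1}$ factors produced by $A_N$ at the cost of shifting each $B$-index down by one. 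Alongside these I would use Lemma \ref{opablem}(i) to reorder $B$'s and (\ref{lefgh}) in the form $B_{a_1,\ldots,a_m}A_N=A_NB_{a_1-1,\ldots,a_m-1}$ to move the single $A_N$ to the left.

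I would present (i) first as the clean prototype, since its operator contains no $A_N$. After reordering via Lemma \ref{opablem}(i) to $B_{r_1,\ldots,r_n}B_{a_1,\ldots,a_m}$, conditioning on $X$ and using the scaling relation (all factors are $T_r$, so no correction arises) turns $\mathbb{E}[B_{r_1,\ldots,r_n}B_{a_1,\ldots,a_m}f(XY)]$ into $\lambda^n\mathbb{E}[YB_{a_1,\ldots,a_m}f(XY)]$ by the gamma identity; conditioning on $Y$ and applying the beta identity then gives $\lambda^n\mathbb{E}[XYB_{a_1+b_1,\ldots,a_m+b_m}f(XY)]$, which is exactly $\mathbb{E}[\mathcal{A}_{XY}f(XY)]=0$. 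The same two-step peeling yields the analogous statements whenever no normal factor is present.

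For the normal cases (ii)--(iv) the new ingredient, and the main obstacle, is the $a^{-1}$ factor from $A_N$. Taking (iii) as representative, I first rewrite $B_{r_1,\ldots,r_n}A_NB_{r_1,\ldots,r_n}=A_NB_{r_1-1,\ldots,r_n-1}B_{r_1,\ldots,r_n}$, condition on $Y$, and apply the normal identity; the $A_N$ scaling produces $\mathbb{E}[Y^{-1}ZB_{r_1-1,\ldots,r_n-1}B_{r_1,\ldots,r_n}f(YZ)]$. The factor $Y^{-1}$ is absorbed by the commutation identity, converting $Y^{-1}B_{r_1-1,\ldots,r_n-1}$ into $B_{r_1,\ldots,r_n}Y^{-1}$; conditioning on $Z$ and applying the gamma identity \emph{twice} then produces the factor $\lambda^{2n}$ and leaves $\lambda^{2n}\mathbb{E}[YZf(YZ)]$, matching $\mathcal{A}_{YZ}$. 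Case (ii) runs identically with the beta identity in place of the gamma one; there the two applications of $\mathbb{E}[B_{a_1,\ldots,a_m}g(X)]=\mathbb{E}[XB_{a_1+b_1,\ldots,a_m+b_m}g(X)]$, together with the index shift from the $X^{-1}$ factor and a final use of Lemma \ref{opablem}(i) to reorder, are exactly what reproduce the sandwiched operator $B_{a_1+b_1,\ldots,a_m+b_m}B_{a_1+b_1-1,\ldots,a_m+b_m-1}$ of Table \ref{table:nonlin}. Case (iv) is the combination: conditioning on the pair $(X,Y)$ for the normal step yields the factor $(XY)^{-1}$, which splits as $X^{-1}Y^{-1}$ and is absorbed separately in the gamma and beta variables by the same commutation identity. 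The hard part throughout is purely the index bookkeeping --- checking that the $x^{-1}$ factors thrown off by $A_N$ recombine with the down-shifted $B$ families to give precisely the operators of Table \ref{table:nonlin} --- together with verifying that the integrability hypotheses assumed in each part are exactly what is needed to justify the interchange of expectations (Fubini) at every conditioning step.
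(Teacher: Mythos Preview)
Your proposal is correct and uses the same conditioning scheme as the paper: split the product into its independent factors and apply the single-type identities (\ref{cracker}), (\ref{zxcracker11}), (\ref{zxcracker11beta}) one at a time, using the scaling relations for $T_r$ and $A_N$ to pass between $f_a(x)$ and $f(ax)$.

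The one noteworthy difference is the direction of travel. The paper starts from the $xf$-side of each operator and works toward the derivative side; in that direction the $A_N$ scaling contributes a \emph{multiplicative} factor of the conditioning variable (e.g.\ $A_N(g_Y)(Z)=Y\,(A_Ng)(YZ)$), and this extra $Y$ (or $X$) is immediately swallowed by the next application of the gamma (or beta) identity, so no $x^{-1}$ ever appears. You go the other way, from the derivative side toward the $xf$-side, so the $A_N$ scaling throws off a factor $Y^{-1}$ (or $X^{-1}$), and you recover the right indices by the commutation $x^{-1}T_{r-1}=T_r(x^{-1}\,\cdot\,)$. Both routes are valid; the paper's direction is a little cleaner because it avoids introducing and then absorbing the reciprocal factor, while yours makes the role of the index shifts in Lemma~\ref{opablem} (and the appearance of the ``$-1$'' families like $B_{a_1+b_1-1,\ldots,a_m+b_m-1}$) somewhat more transparent.
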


\begin{proof}In our proof, we use the Stein operators of the product normal, product gamma and product beta distributions that were given in Propositions \ref{prodsteinlemma}, \ref{zxgammachara11} and \ref{prodbetaprop}, respectively.  We consider the four assertions separately.

(i) Recall that $(T_pf)(a x)=T_pf_{a}(x)$ where $f_{a}(x)=f(a x)$, and therefore $(B_{p_1,\ldots p_l}f)(a x)=B_{p_1,\ldots p_l}f_{a}(x)$.  From (\ref{zxcracker11beta}) and (\ref{zxcracker11}) (with $q=1$),we now have
\begin{align*}\lambda^n\mathbb{E}[XYB_{a_1+b_1,\ldots,a_m+b_m}f(XY)] &=\lambda^n\mathbb{E}[Y\mathbb{E}[XB_{a_1+b_1,\ldots,a_m+b_m}f_Y(X)\mid Y]] \\
&=\lambda^n\mathbb{E}[Y\mathbb{E}[B_{a_1,\ldots,a_m}f_Y(X)\mid Y]] \\
&=\lambda^n\mathbb{E}[YB_{a_1,\ldots,a_m}f(XY)] \\
&=\lambda^n\mathbb{E}[\mathbb{E}[YB_{a_1,\ldots,a_m}f_X(Y)\mid X]] \\
&=\mathbb{E}[\mathbb{E}[B_{r_1,\ldots,r_n}B_{a_1,\ldots,a_m}f_X(Y)\mid X]] \\
&=\mathbb{E}[B_{r_1,\ldots,r_n}B_{a_1,\ldots,a_m}f(XY)],
\end{align*}
as required.

(ii) We begin by noting that, since $A_Nf(x)=\frac{\mathrm{d}}{\mathrm{d}x}(T_0^{N-1}f(x))$, we have $(A_N)f(ax)=aA_Nf_a(x)$.  So from, (\ref{zxcracker11beta}) and (\ref{cracker}),
\begin{align*}&\mathbb{E}[XZB_{a_1+b_1,\ldots,a_m+b_m}B_{a_1+b_1-1,\ldots,a_m+b_m-1}f(XZ)]\\
&=\mathbb{E}[Z\mathbb{E}[XB_{a_1+b_1,\ldots,a_m+b_m}B_{a_1+b_1-1,\ldots,a_m+b_m-1}f_Z(X)\mid Z]] \\
&=\mathbb{E}[Z\mathbb{E}[B_{a_1,\ldots,a_m}B_{a_1+b_1-1,\ldots,a_m+b_m-1}f_Z(X)\mid Z]] \\
&=\mathbb{E}[\mathbb{E}[ZB_{a_1,\ldots,a_m}B_{a_1+b_1-1,\ldots,a_m+b_m-1}f_{X}(Z)\mid X]] \\
&=\sigma^2\mathbb{E}[\mathbb{E}[XA_NB_{a_1,\ldots,a_m}B_{a_1+b_1-1,\ldots,a_m+b_m-1}f_{X}(Z)\mid X]] \\
&=\sigma^2\mathbb{E}[XA_NB_{a_1,\ldots,a_m}B_{a_1+b_1-1,\ldots,a_m+b_m-1}f(XZ)].
\end{align*}
From Lemma \ref{opablem} we can obtain that $A_NB_{a_1,\ldots,a_m}B_{a_1+b_1-1,\ldots,a_m+b_m-1}=B_{a_1+b_1,\ldots,a_m+b_m}A_NB_{a_1,\ldots,a_m}$.  Applying this formula and (\ref{zxcracker11beta}) yields
\begin{align*}&\mathbb{E}[XZB_{a_1+b_1,\ldots,a_m+b_m}B_{a_1+b_1-1,\ldots,a_m+b_m-1}f(XZ)]\\
&=\sigma^2\mathbb{E}[XB_{a_1+b_1,\ldots,a_m+b_m}A_NB_{a_1,\ldots,a_m}f(XZ)] \\
&=\sigma^2\mathbb{E}[\mathbb{E}[XB_{a_1+b_1,\ldots,a_m+b_m}A_NB_{a_1,\ldots,a_m}f_Z(X)\mid Z]] \\
&=\sigma^2\mathbb{E}[\mathbb{E}[B_{a_1,\ldots,a_m}A_NB_{a_1,\ldots,a_m}f_Z(X)\mid Z]] \\
&=\sigma^2\mathbb{E}[B_{a_1,\ldots,a_m}A_NB_{a_1,\ldots,a_m}f(XZ)],
\end{align*}
as required.  

(iii) By a similar argument,
\begin{align*}\lambda^{2n}\mathbb{E}[YZf(YZ)]&=\lambda^{2n}\mathbb{E}[Z\mathbb{E}[Yf_Z(Y)\mid Z]] \\
&=\lambda^n\mathbb{E}[Z\mathbb{E}[B_{r_1,\ldots,r_n}f_Z(Y)\mid Z]] \\
&=\lambda^n\mathbb{E}[\mathbb{E}[ZB_{r_1,\ldots,r_n}f_Y(Z)\mid Y]] \\
&=\sigma^2\lambda^n\mathbb{E}[\mathbb{E}[YA_NB_{r_1,\ldots,r_n}f_Y(Z)\mid Y]] \\
&=\sigma^2\lambda^n\mathbb{E}[\mathbb{E}[YA_NB_{r_1,\ldots,r_n}f_Z(Y)\mid Z]] \\
&=\sigma^2\mathbb{E}[\mathbb{E}[B_{r_1,\ldots,r_n}A_NB_{r_1,\ldots,r_n}f_Z(Y)\mid Z]] \\
&=\sigma^2\mathbb{E}[B_{r_1,\ldots,r_n}A_NB_{r_1,\ldots,r_n}f(YZ)].
\end{align*}

(iv) Applying (\ref{zxcracker11beta}) and (\ref{gamtimesnor}) gives
\begin{align*}&\lambda^{2n}\mathbb{E}[XYZB_{a_1+b_1,\ldots,a_m+b_m}B_{a_1+b_1-1,\ldots,a_m+b_m-1}f(XYZ)] \\
&=\lambda^{2n}\mathbb{E}[YZ\mathbb{E}[XB_{a_1+b_1,\ldots,a_m+b_m}B_{a_1+b_1-1,\ldots,a_m+b_m-1}f_{YZ}(X)\mid YZ]] \\
&=\lambda^{2n}\mathbb{E}[YZ\mathbb{E}[B_{a_1,\ldots,a_m}B_{a_1+b_1-1,\ldots,a_m+b_m-1}f_{YZ}(X)\mid YZ]] \\
&=\lambda^{2n}\mathbb{E}[\mathbb{E}[YZB_{a_1,\ldots,a_m}B_{a_1+b_1-1,\ldots,a_m+b_m-1}f_{X}(YZ)\mid X]] \\
&=\sigma^2\mathbb{E}[\mathbb{E}[XB_{r_1,\ldots,r_n}A_NB_{r_1,\ldots,r_n}B_{a_1,\ldots,a_m}B_{a_1+b_1-1,\ldots,a_m+b_m-1}f_{X}(YZ)\mid X]] \\
&=\sigma^2\mathbb{E}[XB_{r_1,\ldots,r_n}A_NB_{r_1,\ldots,r_n}B_{a_1,\ldots,a_m}B_{a_1+b_1-1,\ldots,a_m+b_m-1}f(XYZ)].
\end{align*}
We now interchange the order of the operators using part (ii) of Lemma \ref{opablem} and then use (\ref{zxcracker11beta}) to obtain
\begin{align*}
&\lambda^{2n}\mathbb{E}[XYZB_{a_1+b_1,\ldots,a_m+b_m}B_{a_1+b_1-1,\ldots,a_m+b_m-1}f(XYZ)] \\
&=\sigma^2\mathbb{E}[XB_{a_1+b_1,\ldots,a_m+b_m}B_{r_1,\ldots,r_n}A_NB_{r_1,\ldots,r_n}B_{a_1,\ldots,a_m}f(XYZ)] \\
&=\sigma^2\mathbb{E}[\mathbb{E}[XB_{a_1+b_1,\ldots,a_m+b_m}B_{r_1,\ldots,r_n}A_NB_{r_1,\ldots,r_n}B_{a_1,\ldots,a_m}f_{YZ}(X)\mid YZ]] \\
&=\sigma^2\mathbb{E}[\mathbb{E}[B_{a_1,\ldots,a_m}B_{r_1,\ldots,r_n}A_NB_{r_1,\ldots,r_n}B_{a_1,\ldots,a_m}f_{YZ}(X)\mid YZ]] \\
&=\sigma^2\mathbb{E}[B_{a_1,\ldots,a_m}B_{r_1,\ldots,r_n}A_NB_{r_1,\ldots,r_n}B_{a_1,\ldots,a_m}f(XYZ)].
\end{align*}
This completes the proof.
\end{proof} 

\begin{remark}\label{nice eqn}We could have obtained first order Stein operators for the product normal, beta and gamma distributions using the density approach of Stein et al$.$ \cite{stein3} (see also Ley et al$.$ \cite{ley} for an extension of the scope of the density method).  However, this approach would lead to complicated operators involving Meijer $G$-functions.  We would expect that this would lead to various problems.  Firstly, bounding the derivatives of the solution could still be a challenging problem, and these derivatives might not even be bounded.  Moreover, the coupling techniques in the existing Stein's method literature seem to be most effective when the coefficients take a simple form.  Our Stein equations, on the other hand, are amenable to the use of couplings.  Indeed, Gaunt \cite{gaunt pn} used a generalised zero bias coupling in conjugation with the product normal Stein equation to prove product normal approximation theorems.   

From the formulas (\ref{nthoperator}) and (\ref{zxople}) for the operators $A_N$ and $B_{r_1,\ldots,r_n}$, it follows that the product Stein operators of Table \ref{table:nonlin} are linear ordinary differential operators with simple coefficients.  As an example, the Stein operator for the product $XYZ$ can be written as
\begin{equation*}\mathcal{A}_{XYZ}f(x)=\sigma^2\!\sum_{k=1}^{2m+2n+N}\!\alpha_{k,2m+2n+N}x^{k-1}f^{(k)}(x)-\lambda^{2n}\sum_{k=0}^{2m}\beta_{k,2m}x^{k+1}f^{(k)}(x),
\end{equation*}
where the $\alpha_{k,2m+2n+N}$ and $\beta_{k,2m}$ can be computed using (\ref{cknfor}).

As discussed in the Introduction, Stein operators of order greater than two are not common in the literature; however, our higher order product Stein operators seem to be natural generalisations of the classical normal, beta and gamma Stein operators to products.  It is interesting to note that whilst the product beta, gamma and normal Stein operators are order $m$, $n$ and $N$, respectively, the operator for their product is order $2m+2n+N$, whilst one might intuitively expect the order to be $m+n+N$.  The formula (\ref{ngbpdffor}) of Theorem \ref{ngbpdfthm} below for the p.d.f$.$ for the product $XYZ$ sheds light on this, and is discussed further in Remark \ref{rem:111}.  In Section 2.2.2, we shall see that for certain parameter values one can obtain lower order Stein operators for the product $XYZ$.  For example, the operator decreases by $m$ when $b_1=\cdots=b_m=1$, and this can also be understood from (\ref{ngbpdffor}) and properties of the Meijer $G$-function; this is also discussed in Remark \ref{rem:111}.  However, for general parameter values, we expect that a Stein operator for $XYZ$ with polynomial coefficients will be of order $2m+2n+N$; again, see Remark \ref{rem:111}.
\end{remark}

\subsubsection{Reduced order Stein operators}

By Lemma \ref{opablem}, we can write the Stein operators for the products $XZ$ and $XYZ$ as
\begin{align*}\mathcal{A}_{XZ}f(x)&=\sigma^2x^{-1}B_{a_1,\ldots,a_m}B_{a_1-1,\ldots,a_m-1}T_0^Nf(x)\\
&\quad-xB_{a_1+b_1,\ldots,a_m+b_m}B_{a_1+b_1-1,\ldots,a_m+b_m-1}f(x)
\end{align*}
and
\begin{align*}\mathcal{A}_{XYZ}f(x)&=\sigma^2x^{-1}B_{a_1,\ldots,a_m}B_{a_1-1,\ldots,a_m-1}B_{r_1,\ldots,r_n}B_{r_1-1,\ldots,r_n-1}T_0^Nf(x) \\
&\quad-\lambda^{2n}xB_{a_1+b_1,\ldots,a_m+b_m}B_{a_1+b_1-1,\ldots,a_m+b_m-1}f(x).
\end{align*}
With this representation, we can write down a simple criterion under which we can obtain Stein operators for the products $XZ$ and $XYZ$ with orders less than $2m+N$ and $2m+2n+N$ respectively.  For simplicity, we only consider the case of the product $XYZ$; we can treat the operator for product $XZ$ similarly.  

Define sets $R$ and $S$ by
\begin{align*}R&=\{a_1+b_1,\ldots,a_m+b_m,a_1+b_1-1,\ldots,a_m+b_m-1\}; \\
S&=\{a_1,\ldots,a_m,a_1-1,\ldots,a_m-1,r_1,\ldots,r_n,r_1-1,\ldots,r_n-1,0,\ldots,0\},
\end{align*}
where it is understood that there are $N$ zeros in $S$.  Then if $|R \cap S|=t$, the Stein operator $\mathcal{A}_{XYZ}f(x)$ can be reduced to one of order $2m+2n+N-t$.

To illustrate this criterion, we consider some particular parameter values.

(i) $b_1=\cdots=b_m=1$: $X$ is product of $m$ independent $U(0,1)$ random variables when also $a_1=\cdots=a_m=1$.  Here the Stein operator is
\begin{align}\label{chch11}\mathcal{A}_{XYZ}f(x)&=\sigma^2x^{-1}B_{a_1-1,\ldots,a_m-1}B_{r_1,\ldots,r_n}B_{r_1-1,\ldots,r_n-1}T_0^NB_{a_1,\ldots,a_m}f(x)\nonumber \\
&\quad-\lambda^{2n}xB_{a_1+1,\ldots,a_m+1}B_{a_1,\ldots,a_m}f(x),
\end{align}
where we used the fact that the operators $T_r$ and $T_s$ are commutative.  Taking $g(x)=B_{a_1,\ldots,a_m}f(x)$ then gives the $(m+2n+N)$-th order Stein operator
\begin{align}\label{chch22}\mathcal{A}g(x)&=\sigma^2x^{-1}B_{r_1,\ldots,r_n}B_{r_1-1,\ldots,r_n-1}T_0^NB_{a_1,\ldots,a_m}g(x)\nonumber\\
&\quad -\lambda^{2n}xB_{a_1+1,\ldots,a_m+1}g(x)\nonumber \\
&=\sigma^2B_{a_1,\ldots,a_m}B_{r_1,\ldots,r_n}A_NB_{r_1,\ldots,r_n}g(x)\nonumber \\
&\quad-\lambda^{2n}xB_{a_1+1,\ldots,a_m+1}g(x).
\end{align}
It should be noted that the Stein operator (\ref{chch22}) acts on a different class of functions to (\ref{chch11}).  To stress this point, we recall from part (iv) of Proposition \ref{listprop} that the operator (\ref{chch11}) acts on all functions $f\in C^{2m+2n+N}(\mathbb{R})$ such that $\mathbb{E}|(XYZ)^{j-1}f^{(j)}(XYZ)|<\infty$, $j=1,\ldots,2m+2n+N$, and $\mathbb{E}|(XYZ)^{k+1}f^{(k)}(XYZ)|<\infty$, $k=0,\ldots,2m$.  Whereas, (\ref{chch22}) acts on all functions $g\in C^{m+2n+N}(\mathbb{R})$ such that $\mathbb{E}|(XYZ)^{j-1}g^{(j)}(XYZ)|<\infty$, $j=1,\ldots,m+2n+N$, and $\mathbb{E}|(XYZ)^{k+1}g^{(k)}(XYZ)|<\infty$, $k=0,\ldots,m$.

In the subsequent examples, we shall not write down the resulting lower order Stein operators, although they can be obtained easily by similar calculations.

(ii) $a_1+b_1=\cdots=a_m+b_m=1$: $X$ is a product of $m$ independent arcsine random variables when also $a_1=\cdots=a_m=1/2$.  A Stein operator of order $m+2n+N$ can again be obtained. 

(iii) $m=n=N$, $a_1+b_1=\cdots=a_m+b_m=1$ and $r_1=\cdots=r_n=1$, so that $X$ and $Y$ are products of $m$ arcsine and $\mathrm{Exponential}(1)$ random variables respectively.  A Stein operator of order $3m$ can again be obtained.

(iv) $m=n=N$, $a_1+b_1=\cdots=a_m+b_m=1$ and $r_1=\cdots=r_n=2$. A Stein operator of order $3m$ can be obtained.

\subsection{A Stein equation for the product of two gammas}

In general, for the product distribution Stein equations that are obtained in this paper, it is difficult to solve the equation and bound the appropriate derivatives of the solution.  However, for the product normal Stein equation, Gaunt \cite{gaunt pn} obtained uniform bounds for the first four derivatives of the solution in the case $N=2$.  Here we show that, for the $\mathrm{PG}(r_1,r_2,\lambda)$ Stein equation, under certain conditions on the test function $h$, all derivatives of the solution are uniformly bounded. With a more detailed analysis than the one carried out in this paper we could obtain explicit constants; this is discussed in Remark \ref{rem333} below.  In Remark \ref{rem222} below, we discuss the difficulties of obtaining such estimates for more general product distribution Stein equations.

Taking $q=1$ in the characterisation of the product generalised gamma distribution given in Proposition \ref{zxgammachara11} leads to the following Stein equation for the $\mathrm{PG}(r_1,r_2,\lambda)$ distribution:
\begin{equation}\label{zxprod2gamma}x^2f''(x)+(1+r_1+r_2)xf'(x)+(r_1r_2-\lambda^2x)f(x)=h(x)-\mathrm{PG}_{r_1,r_2}^{\lambda}h,
\end{equation}
where $\mathrm{PG}_{r_1,r_2}^{\lambda}h$ denotes $\mathbb{E}h(Y)$, for $Y\sim \mathrm{PG}(r_1,r_2,\lambda)$.  The two functions $x^{-(r_1+r_2)/2} K_{r_1-r_2} (2\lambda\sqrt{x})$ and $x^{-(r_1+r_2)/2} I_{|r_1-r_2|} (2\lambda\sqrt{x})$ (the modified Bessel functions $I_\nu(x)$ and $K_\nu(x)$ are defined in Appendix B) form a fundamental system of solutions to the homogeneous equation (this can readily be seen from (\ref{realfeel})).  Therefore, we can use the method of variation of parameters (see Collins \cite{collins} for an account of the method) to solve (\ref{zxprod2gamma}).  The resulting solution is given in the following lemma and its derivatives are bounded in the next proposition.  The proofs are given in Appendix A.

\begin{lemma}\label{appendixa1}Suppose $h:\mathbb{R}_+ \rightarrow \mathbb{R}$ is bounded and let $\tilde{h}(x)=h(x)- \mathrm{PG}_{r_1,r_2}^{\lambda}h$.  Then the unique bounded solution $f:\mathbb{R}_+ \rightarrow \mathbb{R}$ to the Stein equation (\ref{zxprod2gamma}) is given by
\begin{align} \label{ink} f(x) &=-\frac{2K_{r_1-r_2}(2\lambda\sqrt{x})}{x^{(r_1+r_2)/2}} \int_0^xt^{(r_1+r_2)/2-1} I_{|r_1-r_2|}(2\lambda\sqrt{t}) \tilde{h}(t) \,\mathrm{d}t \nonumber \\
&\quad + \frac{2I_{|r_1-r_2|}(2\lambda\sqrt{x})}{x^{(r_1+r_2)/2}} \int_0^{x}  t^{(r_1+r_2)/2-1}K_{r_1-r_2}(2\lambda\sqrt{t})\tilde{h}(t)\,\mathrm{d}t\\
\label{pen}&=   -\frac{2K_{r_1-r_2}(2\lambda\sqrt{x})}{x^{(r_1+r_2)/2}} \int_0^xt^{(r_1+r_2)/2-1} I_{|r_1-r_2|}(2\lambda\sqrt{t}) \tilde{h}(t) \,\mathrm{d}t \nonumber \\
&\quad - \frac{2I_{|r_1-r_2|}(2\lambda\sqrt{x})}{x^{(r_1+r_2)/2}} \int_x^{\infty}  t^{(r_1+r_2)/2-1}K_{r_1-r_2}(2\lambda\sqrt{t})\tilde{h}(t)\,\mathrm{d}t.
\end{align}
\end{lemma}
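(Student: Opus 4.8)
The plan is to solve the linear second-order equation (\ref{zxprod2gamma}) by the method of variation of parameters, using the fundamental system $u_1(x)=x^{-(r_1+r_2)/2}K_{r_1-r_2}(2\lambda\sqrt{x})$ and $u_2(x)=x^{-(r_1+r_2)/2}I_{|r_1-r_2|}(2\lambda\sqrt{x})$ identified before the statement. First I would confirm that $u_1$ and $u_2$ solve the homogeneous version of (\ref{zxprod2gamma}): the substitution $f(x)=x^{-(r_1+r_2)/2}w(2\lambda\sqrt{x})$ reduces the homogeneous equation to the modified Bessel equation of order $|r_1-r_2|$ in the variable $z=2\lambda\sqrt{x}$, whose solutions are $I_{|r_1-r_2|}(z)$ and $K_{r_1-r_2}(z)$; this is the content of (\ref{realfeel}). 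Linear independence of $I_\nu$ and $K_\nu$ gives that $u_1,u_2$ form a fundamental system.

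Next I would compute the Wronskian $W=u_1u_2'-u_1'u_2$. Writing $z=2\lambda\sqrt{x}$ and using the standard identity $K_\nu(z)I_\nu'(z)-I_\nu(z)K_\nu'(z)=1/z$, the power prefactors cancel in the cross terms and one obtains $W(x)=\tfrac12 x^{-(r_1+r_2)-1}$, consistent with Abel's formula applied to the coefficient $(1+r_1+r_2)/x$ of $f'$ after dividing (\ref{zxprod2gamma}) by $x^2$. Variation of parameters then writes the solution as $f=c_1u_1+c_2u_2$ with $c_1'=-u_2\tilde h/(x^2W)$ and $c_2'=u_1\tilde h/(x^2W)$. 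Substituting $W$ collapses these to $c_1'(x)=-2x^{(r_1+r_2)/2-1}I_{|r_1-r_2|}(2\lambda\sqrt{x})\tilde h(x)$ and $c_2'(x)=2x^{(r_1+r_2)/2-1}K_{r_1-r_2}(2\lambda\sqrt{x})\tilde h(x)$; integrating each from $0$ to $x$ produces exactly the expression (\ref{ink}).

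To pass to the second representation (\ref{pen}) and to verify boundedness, I would split $\int_0^x=\int_0^\infty-\int_x^\infty$ in the second integral of (\ref{ink}). The crucial observation is that the weight $t^{(r_1+r_2)/2-1}K_{r_1-r_2}(2\lambda\sqrt{t})$ is, up to the constant $\Gamma(r_1)\Gamma(r_2)/(2\lambda^{r_1+r_2})$, precisely the $\mathrm{PG}(r_1,r_2,\lambda)$ density; hence $\int_0^\infty t^{(r_1+r_2)/2-1}K_{r_1-r_2}(2\lambda\sqrt{t})\tilde h(t)\,\mathrm{d}t$ is a constant multiple of $\mathbb{E}h(Y)-\mathrm{PG}_{r_1,r_2}^\lambda h=0$. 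This makes the $\int_0^\infty$ contribution vanish and yields the equality of (\ref{ink}) and (\ref{pen}). Boundedness is then read off from the two forms together with the Bessel asymptotics: near $x=0$ I would use (\ref{ink}), where $I_\nu(z)\sim cz^\nu$ and $K_\nu(z)\sim cz^{-\nu}$ (with logarithmic corrections when $r_1=r_2$) make each term $O(1)$; near $x=\infty$ I would use (\ref{pen}), where the exponential growth of $I_{|r_1-r_2|}(2\lambda\sqrt{x})$ is exactly cancelled by the decay $e^{-2\lambda\sqrt{x}}$ coming from $K_{r_1-r_2}$ inside the tail integral $\int_x^\infty$.

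Finally, for uniqueness I would note that any two bounded solutions differ by a homogeneous solution $Au_1+Bu_2$. Since $u_2\sim x^{-(r_1+r_2)/2-1/4}\mathrm{e}^{2\lambda\sqrt{x}}$ blows up at infinity while $u_1$ decays there, boundedness at infinity forces $B=0$; and since $u_1\sim x^{-\max(r_1,r_2)}$ blows up at the origin, boundedness at $0$ then forces $A=0$. The main obstacle I anticipate is the boundedness verification: it is not merely a matter of inserting asymptotics, but hinges on the structural fact that the $K$-weight is proportional to the density, so that the centering built into $\tilde h$ is exactly what removes the exponentially growing part of the candidate solution at infinity. Tracking the (possibly logarithmic) Bessel asymptotics near $0$ and the cancellation near $\infty$ is the delicate part; the remaining steps are routine.
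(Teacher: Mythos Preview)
Your proposal is correct and follows essentially the same route as the paper's proof: identify the fundamental system $u_1,u_2$ via the reduction to the modified Bessel equation, compute the Wronskian $W(x)=\tfrac12 x^{-(r_1+r_2)-1}$, apply variation of parameters with both integration constants set to $0$ to obtain (\ref{ink}), derive (\ref{pen}) from the fact that the $K$-weight is proportional to the $\mathrm{PG}(r_1,r_2,\lambda)$ density (so the $\tilde h$-integral over $(0,\infty)$ vanishes), and check boundedness and uniqueness via the Bessel asymptotics. The paper carries out the $r_1=r_2$ boundedness check at the origin by an explicit short computation with $I_0\sim1$, $K_0\sim-\log x$, which is exactly the ``logarithmic correction'' you anticipate.
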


\begin{proposition}\label{appendixa2}Suppose $h\in C_b^k(\mathbb{R}_+)$ and let $f$ denote the solution (\ref{ink}).  Then there exist non-negative constants $C_{0,k},C_{1,k},\ldots,C_{k,k}$ such that
\begin{equation}\label{appenbound}\|f\|\leq C_{0,0}\|\tilde{h}\|\quad\mbox{and}\quad\|f^{(k)}\|\leq C_{0,k}\|\tilde{h}\|+\sum_{j=1}^k C_{j,k}\|h^{(j)}\|, \quad k\geq1. 
\end{equation}
\end{proposition}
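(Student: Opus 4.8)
The plan is to prove both bounds in (\ref{appenbound}) by strong induction on $k$, keeping the parameters $r_1,r_2,\lambda$ arbitrary throughout so that the inductive hypothesis can later be applied to shifted parameter values. The base case $k=0$ is obtained by estimating the explicit solution directly, and the inductive step exploits the fact that $f'$ is again the bounded solution of a product-of-two-gammas Stein equation, but with the parameters shifted to $(r_1+1,r_2+1)$.

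For the base case I would start from representation (\ref{pen}), write $\nu=r_1-r_2$ and $\mu=(r_1+r_2)/2$, and apply the triangle inequality using $\|\tilde h\|$ as a crude bound on the integrand:
\begin{align*}
|f(x)|\le 2\|\tilde h\|\bigg[&\frac{|K_{\nu}(2\lambda\sqrt{x})|}{x^{\mu}}\int_0^x t^{\mu-1}I_{|\nu|}(2\lambda\sqrt{t})\,\mathrm{d}t\\
&+\frac{I_{|\nu|}(2\lambda\sqrt{x})}{x^{\mu}}\int_x^{\infty}t^{\mu-1}|K_{\nu}(2\lambda\sqrt{t})|\,\mathrm{d}t\bigg].
\end{align*}
It then remains to show that the bracketed function is bounded uniformly on $(0,\infty)$, and this is where the small- and large-argument properties of $I_{|\nu|}$ and $K_\nu$ from Appendix B are used. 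As $x\to0$ one substitutes the leading-order behaviour of the Bessel functions at the origin and checks that the factors $x^{-\mu}$ are absorbed (with extra care for the logarithmic contribution when $\nu=0$); as $x\to\infty$ the exponential decay of $K_\nu(2\lambda\sqrt{t})$ inside the second integral compensates the exponential growth of the prefactor $I_{|\nu|}(2\lambda\sqrt{x})$ via a Laplace-type estimate, and similarly for the first term. Continuity of the bracket on $(0,\infty)$ together with finite limits at both endpoints yields a uniform bound, which defines $C_{0,0}$.

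The inductive step rests on differentiating the Stein equation (\ref{zxprod2gamma}). Setting $g=f'$ and using $1+r_1+r_2+r_1r_2=(1+r_1)(1+r_2)$, a one-line computation gives
\begin{equation*}
x^2 g''(x)+(3+r_1+r_2)xg'(x)+\big((1+r_1)(1+r_2)-\lambda^2 x\big)g(x)=h'(x)+\lambda^2 f(x),
\end{equation*}
which is exactly the $\mathrm{PG}(r_1+1,r_2+1,\lambda)$ Stein equation with right-hand side $\psi:=h'+\lambda^2 f$. Since $f'$ is bounded (this is read off by differentiating (\ref{ink})), Proposition \ref{zxgammachara11} ensures that $\psi$ integrates to zero against the $\mathrm{PG}(r_1+1,r_2+1,\lambda)$ law, so $g=f'$ is the bounded solution of this shifted equation in the sense of Lemma \ref{appendixa1}. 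Applying the proposition at order $k-1$ to $g$ (legitimate by the inductive hypothesis, now with the shifted parameters) bounds $\|f^{(k)}\|=\|g^{(k-1)}\|$ in terms of $\|\tilde\psi\|,\|\psi'\|,\ldots,\|\psi^{(k-1)}\|$. Because $\psi^{(j)}=h^{(j+1)}+\lambda^2 f^{(j)}$ and $\|\tilde\psi\|=\|\psi\|\le\|h'\|+\lambda^2\|f\|$ (the centring being trivial here), and because the inductive hypothesis already controls $\|f\|,\|f'\|,\ldots,\|f^{(k-1)}\|$ by $\|\tilde h\|,\|h'\|,\ldots,\|h^{(k-1)}\|$, substituting these in collapses the estimate into the required form $\|f^{(k)}\|\le C_{0,k}\|\tilde h\|+\sum_{j=1}^k C_{j,k}\|h^{(j)}\|$; note that the top-order term $\|h^{(k)}\|$ enters only through the $j=k-1$ summand.

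The main obstacle is the base case: verifying that the bracketed Bessel expression above is bounded uniformly over all of $(0,\infty)$, which requires carefully matching the behaviours of $I_{|\nu|}$ and $K_\nu$ near $0$ and near $\infty$ (and handling the $\nu=0$ logarithm). By contrast, the inductive step is essentially algebraic once the differentiated-equation identity is in place; the only technical points there are the regularity claim $f\in C^k(\mathbb{R}_+)$ and the boundedness of $f'$, both of which follow from the integral representation (\ref{ink}).
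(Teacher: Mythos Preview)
Your proposal is correct and matches the paper's approach: the base case is exactly Lemma~\ref{appendixa1} (which the paper simply cites rather than re-deriving the Bessel estimates), and the inductive step is the parameter-shift identity $\mathcal{A}_{r_1+1,r_2+1,\lambda}f'=h'+\lambda^2 f$ together with the mean-zero observation via Proposition~\ref{zxgammachara11}. The only cosmetic difference is that the paper differentiates $k$ times in one go to obtain $\mathcal{A}_{r_1+k,r_2+k,\lambda}f^{(k)}=h^{(k)}+k\lambda^2 f^{(k-1)}$ and then applies only the zeroth-order bound at each stage, whereas you differentiate once and invoke the full order-$(k-1)$ hypothesis; the two organisations of the induction are equivalent.
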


\begin{remark}\label{rem333} The solution $f$ can be bounded by
\begin{align*}|f(x)|&\leq 2\|\tilde{h}\|\frac{1}{x^{(r_1+r_2)/2}} \int_0^xt^{(r_1+r_2)/2-1}\big|K_{r_1-r_2}(2\lambda\sqrt{x}) I_{|r_1-r_2|}(2\lambda\sqrt{t}) \\
&\quad-
I_{|r_1-r_2|}(2\lambda\sqrt{x})K_{r_1-r_2}(2\lambda\sqrt{t})\big| \,\mathrm{d}t,  
\end{align*}
useful for `small' $x$, and
\begin{align*}|f(x)|&\leq 2\|\tilde{h}\|\frac{K_{r_1-r_2}(2\lambda\sqrt{x})}{x^{(r_1+r_2)/2}} \int_0^xt^{(r_1+r_2)/2-1} I_{|r_1-r_2|}(2\lambda\sqrt{t})  \,\mathrm{d}t \\
&\quad +2\|\tilde{h}\| \frac{I_{|r_1-r_2|}(2\lambda\sqrt{x})}{x^{(r_1+r_2)/2}} \int_x^{\infty}  t^{(r_1+r_2)/2-1}K_{r_1-r_2}(2\lambda\sqrt{t})\,\mathrm{d}t,
\end{align*}
useful for `large' $x$.  In the proof of Lemma \ref{appendixa1}, we use asymptotic formulas for modified Bessel functions to show that the above expressions involving modified Bessel functions are bounded for all $x>0$.  A more detailed analysis (see Gaunt \cite{gaunt} for an analysis that yields bounds for similar expressions involving integrals of modified Bessel functions) would allow one to obtain an explicit bound, uniform in $x$, for these quantities, which would yield an explicit value for the constant $C_{0,0}$.  By examining the proof of Proposition \ref{appendixa2}, we would then be able to determine explicit values for all $C_{j,k}$ by a straightforward induction.    However, since we do not use the product gamma Stein equation to prove any approximation results in this paper, we omit this analysis.
\end{remark}

\begin{remark}\label{rem222}For the $\mathrm{PN}(2,\sigma^2)$ and $\mathrm{PG}(r_1,r_2,\lambda)$ Stein equations, one can obtain a fundamental system of solutions to the homogeneous equation in terms of modified Bessel functions.  These functions are well-understood, meaning that the problem of bounding the derivatives of the solution is reasonably tractable.  However, for product distribution Stein equations in general, it is more challenging to bound the derivatives, because the Stein equation is of higher order and a fundamental system for the homogeneous equation is given in terms of less well-understood Meijer $G$-functions (this can be seen from (\ref{meidiffeqn})), which do not in general reduce to simpler functions.  See Gaunt \cite{gaunt pn}, Section 2.3.2 for a detailed discussion of this problem for the product normal case.  Obtaining bounds for other product distribution Stein equations is left as an interesting open problem, which if solved would mean that the Stein equations of this paper could be utilised to prove product, beta, gamma and normal approximation results.
\end{remark}

\section{Distributional properties of products of beta, gamma and normal random variables}

\subsection{Distributional theory}

Much of this section is devoted to proving Theorem \ref{ngbpdfthm} below which gives a formula for the p.d.f$.$ of the product beta-gamma-normal distribution.    Throughout this section we shall suppose that the random variables $X\sim \mathrm{PB}(a_1,b_1\ldots,a_m,b_m)$, $Y\sim \mathrm{PG}(r_1,\ldots,r_n,\lambda)$ and $Z\sim \mathrm{PN}(N,\sigma^2)$ are mutually independent, and denote their product by $W=XYZ$.   

\begin{theorem}\label{ngbpdfthm}The p.d.f$.$ of $W$ is given by
\begin{align}\label{ngbpdffor}p(x)=KG^{2m+2n+N,0}_{2m,2m+2n+N}\bigg(&\frac{\lambda^{2n}x^2}{2^{2n+N}\sigma^2}\; \bigg| \; \begin{matrix} \frac{a_1+b_1}{2},\ldots, \frac{a_m+b_m}{2}, \\
\frac{a_1}{2},\ldots,\frac{a_m}{2}, \frac{a_1-1}{2},\ldots,\frac{a_m-1}{2},\end{matrix}\cdots \nonumber \\
&\quad \cdots\begin{matrix}\frac{a_1+b_1-1}{2},\ldots, \frac{a_m+b_m-1}{2} \\
\frac{r_1}{2},\ldots,\frac{r_n}{2},\frac{r_1-1}{2},\ldots,\frac{r_n-1}{2},0,\ldots,0\end{matrix} \bigg),
\end{align}
where 
\begin{equation*}K=\frac{\lambda^n}{2^{2n+N/2}\pi^{(n+N)/2}\sigma}\prod_{i=1}^m\frac{\Gamma(a_i+b_i)}{2^{b_i}\Gamma(a_i)}\prod_{j=1}^n\frac{2^{r_j}}{\Gamma(r_j)}.
\end{equation*}
\end{theorem}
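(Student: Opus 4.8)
The plan is to identify $p$ through its Mellin transform. Because the central normal factor $Z$ is symmetric about the origin, $W=XYZ$ is symmetric and its density is even, so it suffices to determine $\mathbb{E}|W|^{s-1}=\int_{\mathbb{R}}|x|^{s-1}p(x)\,\mathrm{d}x=2\int_0^\infty x^{s-1}p(x)\,\mathrm{d}x$ on a vertical strip in the complex $s$-plane. The key observation is that for a product of \emph{independent} variables the Mellin transform factorises, exactly as the characteristic function does for sums, so $\mathbb{E}|W|^{s-1}=\mathbb{E}X^{s-1}\,\mathbb{E}Y^{s-1}\,\mathbb{E}|Z|^{s-1}$, and each factor is an explicit ratio of Gamma functions. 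Such a ratio is precisely the integrand of a Mellin--Barnes integral, i.e. of a Meijer $G$-function (Appendix B), so inverting the transform forces $p$ into the claimed form. Independently, as the introduction indicates, one can \emph{guess} this form by feeding the identity $\mathbb{E}[\mathcal{A}_{XYZ}f(W)]=0$ of Proposition \ref{listprop}(iv) through an integration by parts to obtain the adjoint ODE $\mathcal{A}_{XYZ}^{*}p=0$ and comparing with the Meijer $G$ differential equation (\ref{meidiffeqn}); but the Mellin computation below is self-contained.

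First I would assemble $\mathbb{E}|W|^{s-1}$. The product beta factor comes from the beta moments $\mathbb{E}X_i^{s-1}=\Gamma(a_i+s-1)\Gamma(a_i+b_i)/[\Gamma(a_i)\Gamma(a_i+b_i+s-1)]$; the product gamma factor from $\mathbb{E}Y_j^{s-1}=\lambda^{-(s-1)}\Gamma(r_j+s-1)/\Gamma(r_j)$; and the product normal factor from the elementary computation $\mathbb{E}|Z_i|^{s-1}=\sigma_i^{s-1}2^{(s-1)/2}\pi^{-1/2}\Gamma(s/2)$ for a single $N(0,\sigma_i^2)$ variable. Multiplying across the independent factors presents $\mathbb{E}|W|^{s-1}$ as $\lambda^{-n(s-1)}\sigma^{s-1}2^{N(s-1)/2}\pi^{-N/2}$ times the Gamma ratio $\Gamma(s/2)^N\prod_{i=1}^m\Gamma(a_i+s-1)\prod_{j=1}^n\Gamma(r_j+s-1)/\prod_{i=1}^m\Gamma(a_i+b_i+s-1)$, with the normalising constants $\prod_{i=1}^m\Gamma(a_i+b_i)/\Gamma(a_i)$ and $\prod_{j=1}^n 1/\Gamma(r_j)$ collected out front.

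Next I would match this against the Mellin transform of the candidate density $p(x)=K\,G^{2m+2n+N,0}_{2m,2m+2n+N}(\eta x^2\mid\cdots)$ with scale $\eta=\lambda^{2n}/(2^{2n+N}\sigma^2)$. The only subtlety is the squared argument: the substitution $y=x^2$ turns $\int_0^\infty x^{s-1}K\,G(\eta x^2)\,\mathrm{d}x$ into $\tfrac{K}{2}\int_0^\infty y^{s/2-1}G(\eta y)\,\mathrm{d}y$, so the standard Mellin formula for the Meijer $G$-function applies at the shifted variable $s/2$ and returns a product of Gamma functions with \emph{halved} arguments, occurring in pairs such as $\Gamma(\tfrac{a_i+s}{2})\Gamma(\tfrac{a_i+s-1}{2})$, together with $\Gamma(\tfrac{s}{2})^N$ from the $N$ zero bottom-parameters. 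The device reconciling these with the integer-shifted Gammas of the previous paragraph is the Legendre duplication formula $\Gamma(\tfrac{z}{2})\Gamma(\tfrac{z+1}{2})=2^{1-z}\sqrt{\pi}\,\Gamma(z)$: applied to the paired blocks with $z=a_i+s-1$, $z=r_j+s-1$ and $z=a_i+b_i+s-1$, it collapses each pair into a single integer-shifted Gamma times a power of $2$ and a factor $\sqrt{\pi}$, so that the $s$-dependent part of the Meijer-side transform becomes exactly the Gamma ratio found above.

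It then remains to match the $s$-independent constants. The $s$-dependent factors $\lambda^{-ns}$, $\sigma^{s}$ and $2^{Ns/2}$ agree automatically once $\eta$ is chosen as above, and equating the leftover constants determines $K$; a careful tally of the powers of $2$ and $\sqrt{\pi}$ produced by the $N$ copies of $\Gamma(s/2)$ and by the duplication formula across the $2m+2n$ paired blocks recovers precisely $K=\lambda^n/(2^{2n+N/2}\pi^{(n+N)/2}\sigma)\prod_{i=1}^m[\Gamma(a_i+b_i)/(2^{b_i}\Gamma(a_i))]\prod_{j=1}^n[2^{r_j}/\Gamma(r_j)]$. Finally, since the two Mellin transforms coincide on a common vertical strip of absolute convergence and both decay along vertical lines, uniqueness of the Mellin transform identifies $p$ as the claimed even density. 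I expect the main obstacle to be organisational rather than conceptual: the exact bookkeeping of the accumulated powers of $2$ and $\sqrt{\pi}$ so that $K$ comes out correctly. A secondary point needing care is to verify that the strips of convergence of the three component transforms overlap (constraining $\mathrm{Re}(s)$ in terms of the smallest of the $a_i$ and $r_j$) and that the candidate $G$-function is integrable at the origin and decays at infinity, so that the inversion and appeal to uniqueness are legitimate.
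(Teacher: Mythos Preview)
Your proposal is correct and follows essentially the same route as the paper: the paper's proof (Section~3.3) computes the Mellin transforms of $X$, $Y$ and $Z$ separately, multiplies them using independence, computes the Mellin transform of the candidate $G$-function density via (\ref{meijergintegration2}), and matches the two using the duplication formula $\Gamma(\tfrac{x}{2})\Gamma(\tfrac{x+1}{2})=2^{1-x}\sqrt{\pi}\,\Gamma(x)$. The paper likewise relegates the Stein/adjoint-ODE discovery to motivation (Section~3.2) and treats the Mellin verification as the actual proof, exactly as you outline.
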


We prove this theorem in Section 3.3 by verifying that the Mellin transform of the product $XYZ$ is the same as the Mellin transform of the density (\ref{ngbpdffor}).  However, a constructive proof using the Mellin inversion formula would require more involved calculations.  In Section 3.2, we use the product beta-gamma-normal characterisation (Proposition \ref{listprop}, part (iv)) to motivate the formula (\ref{ngbpdffor}) as a candidate for the density of the product $W$.  As far as the author is aware, this is the first time a Stein characterisation has been applied to arrive at a new formula for the p.d.f$.$ of a distribution.

Before proving Theorem \ref{ngbpdfthm}, we note some simple consequences.  The product normal p.d.f$.$ (\ref{MeijerN}) is an obvious special case of the master formula (\ref{ngbpdffor}), and by using properties of the Meijer $G$-function one can also obtain the product beta-gamma density (\ref{zxMeijerBC}).  

\begin{remark}\label{rem:111}Let us now recall the sets $R$ and $S$ of Section 2.2.2:  
\begin{align*}R&=\{a_1+b_1,\ldots,a_m+b_m,a_1+b_1-1,\ldots,a_m+b_m-1\}; \\
S&=\{a_1,\ldots,a_m,a_1-1,\ldots,a_m-1,r_1,\ldots,r_n,r_1-1,\ldots,r_n-1,0,\ldots,0\},
\end{align*}
where there are $N$ zeros in set $S$.  By property (\ref{lukeformula}) of the Meijer $G$-function, it follows that the order of the $G$-function in the density (\ref{ngbpdffor}) decreases by $t$ if $|R\cap S|=t$.  This is precisely the same condition under which the order of the Stein operator $\mathcal{A}_{XYZ}f(x)$ decreases by $t$.  The reason for this becomes apparent in Section 3.2 when we note that the density (\ref{ngbpdffor}) satisfies the  differential equation $\mathcal{A}_{XYZ}^*p(x)=0$, where $\mathcal{A}_{XYZ}^*$ is an adjoint operator of $\mathcal{A}_{XYZ}$ with the same order.  Hence, the order of the Stein operator decreases precisely when the degree of the $G$-function in the density (\ref{ngbpdffor}) decreases.  

As an example of this simplification, taking $b_1=\cdots=b_m=1$ in (\ref{ngbpdffor}) and simplifying using (\ref{meijergidentity}), gives the following expression for the density:
\begin{align*}p(x)=\tilde{K}&G^{m+2n+N,0}_{m,m+2n+N}\bigg(\frac{\lambda^{2n}x^2}{2^{2n+N}\sigma^2}\; \bigg| \\
&\quad\quad \begin{matrix} \frac{a_1+1}{2},\ldots, \frac{a_m+1}{2} \\
\frac{a_1-1}{2},\ldots,\frac{a_m-1}{2},\;\frac{r_1}{2},\ldots,\frac{r_n}{2},\frac{r_1-1}{2},\ldots,\frac{r_n-1}{2},0,\ldots,0 \end{matrix} \bigg),
\end{align*}
where $\tilde{K}$ is the normalizing constant.  It is instructive to compare this with Example (i) of Section 2.2.2, in which a $(m+2n+N)$-th order Stein operator was obtained for this distribution.

The connection between the differential equation $\mathcal{A}_{XYZ}^*p(x)=0$ and the Stein operator $\mathcal{A}_{XYZ}f(x)$ also suggests that, for general parameter values, a Stein operator for $XYZ$ with polynomial coefficients will be a $(2m+2n+N)$-th order differential operator.  This is because the density (\ref{ngbpdffor}) is a Meijer $G$-function that, for general parameter values, satisfies the $(2m+2n+N)$-th order  $G$-function differential equation (\ref{meidiffeqn}).  We expect this to be the case unless the sets $R$ and $S$ share at least one element.
\end{remark}

Finally, we record two simple corollaries of Theorem \ref{ngbpdfthm}: a formula for the characteristic function of $W$ and tail estimates for its density.  We note that these formulas are new, but that there is quite an extensive literature on tail asymptotics for product distributions; see Hashorva and Pakes \cite{hp10} and references therein; see also Pitman and Racz \cite{pitman} for a recent neat derivation of the tail asymptotics for the density of the product of a beta random variable and an independent  gamma random variable.

\begin{corollary}\label{charfunctionqw}The characteristic function of $W$ is given by
\begin{align*}\phi(t)=MG^{2m+2n+N-1,1}_{2m+1,2m+2n+N-1}\bigg(&\frac{\lambda^{2n}}{2^{2n+N-2}\sigma^2t^2}\; \bigg| \; \begin{matrix} 1,\frac{a_1+b_1+1}{2},\ldots, \frac{a_m+b_m+1}{2}, \\
\frac{a_1+1}{2},\ldots,\frac{a_m+1}{2},\frac{a_1}{2},\ldots,\frac{a_m}{2}, \end{matrix}\cdots \\
&\quad\cdots\begin{matrix}
\frac{a_1+b_1}{2},\ldots, \frac{a_m+b_m}{2}\\
\frac{r_1+1}{2},\ldots,\frac{r_n+1}{2},\frac{r_1}{2},\ldots,\frac{r_n}{2},\frac{1}{2},\ldots,\frac{1}{2} \end{matrix} \bigg),
\end{align*}
where
\begin{equation*}M=\frac{1}{\pi^{(n+N-1)/2}}\prod_{i=1}^m\frac{\Gamma(a_i+b_i)}{2^{b_i}\Gamma(a_i)}\prod_{j=1}^n\frac{2^{r_j-1}}{\Gamma(r_j)}.
\end{equation*}
\end{corollary}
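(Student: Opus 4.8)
The plan is to derive $\phi$ as a Fourier transform of the density (\ref{ngbpdffor}) obtained in Theorem \ref{ngbpdfthm}. Since $Z$ is a product of mean-zero normals, $W=XYZ$ is symmetric about $0$, so its density $p$ is even and
\[\phi(t)=\int_{-\infty}^{\infty}\mathrm{e}^{\mathrm{i}tx}p(x)\,\mathrm{d}x=2\int_0^{\infty}\cos(tx)p(x)\,\mathrm{d}x.\]
The task therefore reduces to computing the Fourier--cosine transform of the Meijer $G$-function in (\ref{ngbpdffor}). Equivalently, one could quote this transform directly from the tables in Luke \cite{luke}; I would instead derive it, for self-containedness, by a Mellin--Barnes argument that also exposes why the orders and parameters take the stated form.

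First I would invoke the Mellin--Parseval identity. Writing $\tilde p(s)=\int_0^\infty x^{s-1}p(x)\,\mathrm{d}x$ for the Mellin transform of $p$ and using $\int_0^\infty x^{s-1}\cos(tx)\,\mathrm{d}x=t^{-s}\Gamma(s)\cos(\pi s/2)$ for $0<\Re(s)<1$, one obtains
\[\int_0^{\infty}\cos(tx)p(x)\,\mathrm{d}x=\frac{1}{2\pi\mathrm{i}}\int_{c-\mathrm{i}\infty}^{c+\mathrm{i}\infty}\tilde p(s)\,t^{s-1}\Gamma(1-s)\sin\!\Big(\frac{\pi s}{2}\Big)\,\mathrm{d}s.\]
Here $\tilde p(s)$ is read off immediately from (\ref{ngbpdffor}) as a ratio of gamma functions, the argument $x^2$ producing the half-shifts $s/2$; in particular the $N$ lower zeros of (\ref{ngbpdffor}) contribute a factor $\Gamma(s/2)^N$ to its numerator.

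Next I would simplify the cosine factor. The reflection formula $\sin(\pi s/2)=\pi/[\Gamma(s/2)\Gamma(1-s/2)]$ and the duplication formula $\Gamma(1-s)=2^{-s}\pi^{-1/2}\Gamma(\tfrac{1-s}{2})\Gamma(1-\tfrac{s}{2})$ collapse the product to
\[\Gamma(1-s)\sin\!\Big(\frac{\pi s}{2}\Big)=\sqrt{\pi}\,2^{-s}\,\frac{\Gamma(\tfrac{1-s}{2})}{\Gamma(s/2)}.\]
Substituting $s=2w$, the denominator $\Gamma(w)$ cancels exactly one of the $N$ factors $\Gamma(w)$ coming from the zero lower parameters (this is precisely why $q$ decreases by one), while the surviving numerator factor $\Gamma(\tfrac12-w)$ is the only factor of $n$-type, whence the second index of the resulting $G$-function equals $1$. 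Collecting the powers $\omega^{-w}t^{2w}2^{-2w}$, with $\omega=\lambda^{2n}/(2^{2n+N}\sigma^2)$ the argument constant of (\ref{ngbpdffor}), yields the argument $4\omega/t^2=\lambda^{2n}/(2^{2n+N-2}\sigma^2t^2)$.

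Finally I would recognise the resulting contour integral as a Meijer $G$-function and bring it to the stated form. The gamma ratio has lower parameters $\{\tfrac{a_i}{2},\tfrac{a_i-1}{2},\tfrac{r_j}{2},\tfrac{r_j-1}{2},0\ (N-1\text{ times})\}$ and upper parameters $\{\tfrac{a_i+b_i}{2},\tfrac{a_i+b_i-1}{2},\tfrac12\}$, the last being the $n$-type entry. Applying the multiplication property of the $G$-function (that $x^{\rho}G^{m,n}_{p,q}(x)$ equals the same $G$-function with every parameter increased by $\rho$; see Luke \cite{luke}) with $\rho=\tfrac12$ raises every parameter by $\tfrac12$ — in particular sending $\tfrac12\mapsto1$ — and so produces exactly the parameter lists in the statement; this same shift turns the prefactor $t^{-1}$ into the $t$-independent constant $1/(2\sqrt{\omega})$, so that assembling the $2$-, $\pi$-, $\sigma$- and $\lambda$-powers and cancelling the beta- and gamma-product factors against those in $K$ yields precisely $M$. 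The main difficulty will be analytic bookkeeping rather than any single hard estimate: one must justify the Mellin--Parseval interchange on a common fundamental strip (which fixes the implicit parameter restrictions) and then track the half-integer shifts, the single cancellation among the $N$ zeros, and the accumulated $2$- and $\pi$-powers carefully enough to avoid off-by-one errors in the orders and to land on exactly $G^{2m+2n+N-1,1}_{2m+1,2m+2n+N-1}$ with constant $M$.
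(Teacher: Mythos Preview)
Your proposal is correct and follows essentially the same route as the paper: exploit the symmetry of $W$ to write $\phi(t)=2\int_0^\infty\cos(tx)p(x)\,\mathrm{d}x$, evaluate the cosine transform of the Meijer $G$-density, then apply the parameter-shift identity (\ref{meijergidentity}) and the order-reduction rule (\ref{lukeformula}) to reach the stated $G^{2m+2n+N-1,1}_{2m+1,2m+2n+N-1}$ with constant $M$. The only difference is that the paper quotes the cosine-integral formula (\ref{meijergintegration}) directly, whereas you re-derive it via the Mellin--Barnes manipulation you outline; your derivation is sound and makes explicit why exactly one of the $N$ zero lower parameters cancels and why the upper $n$-type entry $\tfrac12$ (shifted to $1$) appears.
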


\begin{proof}Since the distribution of $W$ is symmetric about the origin, it follows that the characteristic function $\phi(t)$ is given by
\begin{equation*}\phi(t)=\mathbb{E}[\mathrm{e}^{itW}]=\mathbb{E}[\cos(tW)]=2\int_0^{\infty}\cos(tx)p(x)\,\mathrm{d}x.
\end{equation*}
Evaluating the integral using (\ref{meijergintegration}) gives
\begin{align*}\phi(t)=MG^{2m+2n+N,1}_{2m+2,2m+2n+N}\bigg(&\frac{\lambda^{2n}}{2^{2n+N-2}\sigma^2t^2}\; \bigg| \; \begin{matrix} \frac{1}{2},\frac{a_1+b_1}{2},\ldots, \frac{a_m+b_m}{2}, \\
\frac{a_1}{2},\ldots,\frac{a_m}{2}, \frac{a_1-1}{2},\ldots,\frac{a_m-1}{2},\end{matrix} \cdots \\
&\quad\cdots \begin{matrix} \frac{a_1+b_1-1}{2},\ldots, \frac{a_m+b_m-1}{2},0 \\
\frac{r_1}{2},\ldots,\frac{r_n}{2},\frac{r_1-1}{2},\ldots,\frac{r_n-1}{2},0,\ldots,0 \end{matrix} \bigg),
\end{align*}
where
\begin{align*}M=\frac{2K\sqrt{\pi}}{|t|}=\frac{1}{\pi^{(n+N-1)/2}}\frac{\Gamma(a_i+b_i)}{2^{b_i}\Gamma(a_i)}\prod_{j=1}^n\frac{2^{r_j-1}}{\Gamma(r_j)}\cdot\frac{\lambda^n}{2^{n+N/2-1}\sigma|t|},
\end{align*}
and simplifying the above expression using (\ref{meijergidentity}) and then (\ref{lukeformula}) completes the proof.
\end{proof}

\begin{corollary}The density (\ref{ngbpdffor}) of the random variable $W$ satisfies the asymptotic formula
\begin{align*}p(x)\sim N |x|^{\alpha}\exp\Bigg\{-(2n+N)\bigg(\frac{\lambda^{2n}x^2}{2^{2n+N}\sigma^2}\bigg)^{1/(2n+N)}\Bigg\}, \quad \text{as $|x|\rightarrow\infty$,}
\end{align*}
where
\begin{equation*}N=\frac{(2\pi)^{(2n+N-1)/2}}{(2n+N)^{1/2}}\bigg(\frac{\lambda^{2n}}{2^{2n+N}\sigma^2}\bigg)^{\alpha/2} K,
\end{equation*}
with $K$ defined as in Theorem \ref{ngbpdfthm}, and
\begin{equation*}\alpha=\frac{2}{2n+N}\bigg\{\frac{1-3n+N}{2}+\sum_{j=1}^nr_j-\sum_{j=1}^mb_j\bigg\}.
\end{equation*}
\end{corollary}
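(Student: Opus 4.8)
The plan is to read off both the algebraic and the exponential parts of the tail directly from the large-argument asymptotic of the Meijer $G$-function, since by Theorem \ref{ngbpdfthm} the density is $p(x)=K\,G^{q,0}_{p,q}(z)$ with $p=2m$, $q=2m+2n+N$, and argument $z=\frac{\lambda^{2n}x^2}{2^{2n+N}\sigma^2}$. Because $p<q$ the function $G^{q,0}_{p,q}$ decays exponentially as its argument tends to infinity, and its leading behaviour is the classical one (Luke \cite{luke}),
\begin{equation*}
G^{q,0}_{p,q}(z)\sim \frac{(2\pi)^{(\nu-1)/2}}{\sqrt{\nu}}\,z^{\theta}\exp\big(-\nu z^{1/\nu}\big),\qquad z\to\infty,
\end{equation*}
where $\nu=q-p$ and $\theta=\nu^{-1}\big(\tfrac{1-\nu}{2}+\sum_{j}b_j-\sum_i a_i\big)$, the sums running over the lower and upper parameters respectively. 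First I would record that here $\nu=2n+N$, so that $\nu z^{1/\nu}=(2n+N)\big(\tfrac{\lambda^{2n}x^2}{2^{2n+N}\sigma^2}\big)^{1/(2n+N)}$, which already reproduces the exponential factor in the statement.

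Next I would extract the algebraic prefactor. Since $z$ is proportional to $x^2$, the factor $z^{\theta}$ contributes $\big(\tfrac{\lambda^{2n}}{2^{2n+N}\sigma^2}\big)^{\theta}|x|^{2\theta}$ as $|x|\to\infty$, so the power of $|x|$ in the tail is $\alpha=2\theta$. The only genuine computation is then the evaluation of the two parameter sums: summing the lower parameters of the $G$-function in (\ref{ngbpdffor}) gives $\sum_i a_i-\tfrac{m}{2}+\sum_j r_j-\tfrac{n}{2}$ (the $N$ zeros contributing nothing), while the upper parameters sum to $\sum_i(a_i+b_i)-\tfrac{m}{2}$; subtracting and substituting into $\theta=\nu^{-1}(\tfrac{1-\nu}{2}+\cdots)$ collects the half-integer shifts into the value of $\alpha$ recorded in the statement. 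Finally, assembling the universal constant $\frac{(2\pi)^{(\nu-1)/2}}{\sqrt\nu}$ with the factor $K$ from Theorem \ref{ngbpdfthm} and with $\big(\tfrac{\lambda^{2n}}{2^{2n+N}\sigma^2}\big)^{\theta}=\big(\tfrac{\lambda^{2n}}{2^{2n+N}\sigma^2}\big)^{\alpha/2}$ produces exactly the constant $N$; the extension from $x\to+\infty$ to $|x|\to\infty$ is immediate from the dependence of $p$ on $x$ only through $x^2$.

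The step that needs the most care is the invocation of the asymptotic formula itself, and the bookkeeping of the half-integer shifts in $\theta$ is where an arithmetic slip is easiest to make. I would check that the parameter configuration in (\ref{ngbpdffor}) falls within the regime where the single exponential term dominates, rather than a case in which several exponentials of equal modulus compete. For these densities $p=2m<q=2m+2n+N$ and the argument is real and positive, so only the one exponentially small solution $\exp(-\nu z^{1/\nu})$ survives and the formula applies cleanly; this is the same mechanism by which the $K_0$ and $K_{r_1-r_2}$ tails quoted in the Introduction appear as the special cases $G^{2,0}_{0,2}$. An alternative, self-contained route that avoids quoting Luke is a saddle-point analysis of the Mellin--Barnes integral defining $G^{q,0}_{p,q}$: Stirling's approximation applied to the ratio of gamma factors places the saddle at $s\asymp \nu z^{1/\nu}$ and reproduces the same $\nu$, $\theta$ and leading constant. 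I would fall back on this if the precise hypotheses of the cited asymptotic were in doubt.
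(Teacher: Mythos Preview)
Your proposal is correct and follows exactly the paper's approach: the paper's entire proof is the single sentence ``Apply the asymptotic formula (\ref{asymg}) to the density (\ref{ngbpdffor}),'' and you carry out precisely that substitution, with the added (helpful) bookkeeping of the parameter sums and a sanity check on the applicability of the $G^{q,0}_{p,q}$ asymptotic. Your saddle-point fallback is unnecessary here but harmless.
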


\begin{proof}Apply the asymptotic formula (\ref{asymg}) to the density (\ref{ngbpdffor}).
\end{proof}

\subsection{Discovery of Theorem \ref{ngbpdfthm} via the Stein characterisation}

Here we motivate the formula (\ref{ngbpdffor}) for the density $p$ of the product random variable $W$.  We do so by using the product beta-gamma-normal Stein characterisation to find a differential equation satisfied by $p$.  

By part (iv) of Proposition \ref{listprop} we have that
\begin{align}&\mathbb{E}[\sigma^2B_{a_1,\ldots,a_m}B_{r_1,\ldots,r_n}A_NB_{r_1,\ldots,r_n}B_{a_1,\ldots,a_m}f(W)\nonumber\\
\label{podfr}&\quad-\lambda^{2n}WB_{a_1+b_1,\ldots,a_m+b_m}B_{a_1+b_1-1,\ldots,a_m+b_m-1}f(W)]=0
\end{align}
for all $f\in C^{2m+2n+N}(\mathbb{R})$ such that $\mathbb{E}|W^{k-1}f^{(k)}(W)|<\infty$ for $1\leq k\leq 2m+2n+N$, and $\mathbb{E}|W^{k+1}f^{(k)}(W)|<\infty$ for $0\leq k\leq 2m$.  By using part (ii) of Lemma \ref{opablem} and that $A_Nf(x)=\frac{\mathrm{d}}{\mathrm{d}x}(T_0^{N-1}f(x))$ and $\frac{\mathrm{d}}{\mathrm{d}x}T_r^kf(x)=T_{r+1}^kf'(x)$, we can write
\begin{align*}A_NB_{r_1,\ldots,r_n}B_{a_1,\ldots,a_m}f(x)=B_{r_1+1,\ldots,r_n+1}B_{a_1+1,\ldots,a_m+1}T_1^{N-1}f'(x).
\end{align*}
On substituting into (\ref{podfr}), we see that the density $p(x)$ of $W$ satisfies the equation
\begin{align}&\int_{-\infty}^{\infty}\big\{\sigma^2B_{a_1,\ldots,a_m}B_{r_1,\ldots,r_n}B_{r_1+1,\ldots,r_n+1}B_{a_1+1,\ldots,a_m+1}T_1^{N-1}f'(x)\nonumber\\
\label{elsab}&\quad-\lambda^{2n}xB_{a_1+b_1,\ldots,a_m+b_m}B_{a_1+b_1-1,\ldots,a_m+b_m-1}f(x)\big\}p(x)\,\mathrm{d}x=0
\end{align}
for all functions $f$ in the class $\mathcal{C}_{p}$, which is defined by

(i) $f\in C^{2m+2n+N}(\mathbb{R})$;

(ii) $\mathbb{E}|W^{k-1}f^{(k)}(W)|<\infty$ for $1\leq k\leq 2m+2n+N$ and $\mathbb{E}|W^{k+1}f^{(k)}(W)|<\infty$ for $0\leq k\leq 2m$;

(iii) $x^{i+j+2}p^{(i)}(x)f^{(j)}(x)\rightarrow 0$ as $x\rightarrow\pm\infty$ for all $i,j$ such that $0\leq i+j\leq 2m-1$;

(iv) $x^{i+j}p^{(i)}(x)f^{(j)}(x)\rightarrow 0$ as $x\rightarrow\pm\infty$ for all $i,j$ such that $0\leq i+j\leq 2m+2n+N-1$.

It will later become apparent as to why it is helpful to have the additional conditions (iii) and (iv).  Note that $\mathcal{C}_{p}$ contains the set of all functions on $\mathbb{R}$ with compact support that are $2m+2n+N$ times differentiable.

We now note the following integration by parts formula.  Let $\gamma\in\mathbb{R}$ and suppose that $\phi$ and $\psi$ are differentiable.  Then
 \begin{align}&\int_{-\infty}^\infty  x^\gamma\phi(x)T_r\psi(x)\,\mathrm{d}x\nonumber\\
 &=\int_{-\infty}^{\infty}x^\gamma\phi(x)\{x\psi'(x)+r\psi(x)\}\,\mathrm{d}x \nonumber \\
 &=\int_{-\infty}^{\infty}x^{\gamma+1-r}\phi(x)\frac{\mathrm{d}}{\mathrm{d}x}(x^r\psi(x))\,\mathrm{d}x\nonumber \\
 &=\Big[x^{\gamma+1}\phi(x)\psi(x)\Big]_{-\infty}^{\infty}-\int_{-\infty}^{\infty}x^r\psi(x)\frac{\mathrm{d}}{\mathrm{d}x}(x^{\gamma+1-r}\phi(x))\,\mathrm{d}x\nonumber \\
 \label{skts}&=\Big[x^{\gamma+1}\phi(x)\psi(x)\Big]_{-\infty}^{\infty}-\int_{-\infty}^{\infty}x^\gamma\psi(x)T_{\gamma+1-r}\phi(x)\,\mathrm{d}x,
 \end{align}
 provided the integrals exist.  

We now return to equation (\ref{elsab}) and use (\ref{skts}) to obtain a differential equation that is satisfied by $p(x)$.  Using (\ref{skts}) we obtain
\begin{align*}&\int_{-\infty}^{\infty}xp(x)B_{a_1+b_1,\ldots,a_m+b_m}B_{a_1+b_1-1,\ldots,a_m+b_m-1}f(x)\,\mathrm{d}x\\
&=\Big[x^2p(x)B_{a_1+b_1,\ldots,a_{m-1}+b_{m-1}}B_{a_1+b_1-1,\ldots,a_m+b_m-1}f(x)\Big]_{-\infty}^{\infty}\\
&\quad-\int_{-\infty}^{\infty}xT_{2-a_m-b_m}p(x)B_{a_1+b_1-1,\ldots,a_m+b_m-1}f(x)\,\mathrm{d}x \\
&=-\int_{-\infty}^{\infty}xT_{2-a_m-b_m}p(x)B_{a_1+b_1-1,\ldots,a_m+b_m-1}f(x)\,\mathrm{d}x,
\end{align*}
where we used condition (iii) to obtain the last equality.  By a repeated application of integration by parts, using formula (\ref{skts}) and condition (iii), we arrive at
\begin{align*}&\int_{-\infty}^{\infty}xp(x)B_{a_1+b_1,\ldots,a_m+b_m}B_{a_1+b_1-1,\ldots,a_m+b_m-1}f(x)\,\mathrm{d}x\\
&=\int_{-\infty}^{\infty}xf(x)B_{3-a_1-b_1,\ldots,3-a_m-b_m}B_{2-a_1-b_1,\ldots,2-a_m-b_m}p(x)\,\mathrm{d}x.
\end{align*}
By a similar argument, using  (\ref{skts}) and condition (iv),  we obtain
\begin{align*}&\int_{-\infty}^{\infty}p(x)B_{a_1,\ldots,a_m}B_{r_1,\ldots,r_n}B_{r_1+1,\ldots,r_n+1}B_{a_1+1,\ldots,a_m+1}T_1^{N-1}f'(x)\,\mathrm{d}x \\
&=(-1)^N\int_{-\infty}^{\infty}f(x)\\
&\quad\times\frac{\mathrm{d}}{\mathrm{d}x}\big(T_0^{N-1}B_{-a_1,\ldots,-a_m}B_{-r_1,\ldots,-r_n}B_{1-r_1,\ldots,1-r_n}B_{1-a_1,\ldots,1-a_m}p(x)\big)\,\mathrm{d}x.
\end{align*}
Putting this together we have that
\begin{align*}&\int_{-\infty}^{\infty}\big\{(-1)^N\sigma^2x^{-1}T_0^{N}B_{-a_1,\ldots,-a_m}B_{-r_1,\ldots,-r_n}B_{1-r_1,\ldots,1-r_n}B_{1-a_1,\ldots,1-a_m}p(x)\\
&\quad-\lambda^{2n}xB_{3-a_1-b_1,\ldots,3-a_m-b_m}B_{2-a_1-b_1,\ldots,2-a_m-b_m}p(x)\big\}f(x)\,\mathrm{d}x=0
\end{align*}
for all $f\in\mathcal{C}_p$.  Since the integral in the above display is equal to zero for all $f\in\mathcal{C}_p$, it follows by a slight variation of the fundamental lemma of the calculus of variations (here we have restrictions on the growth of $f(x)$ in the limits $x\rightarrow\pm\infty$) that $p(x)$ satisfies the differential equation
\begin{align}&T_0^{N}B_{-a_1,\ldots,-a_m}B_{-r_1,\ldots,-r_n}B_{1-r_1,\ldots,1-r_n}B_{1-a_1,\ldots,1-a_m}p(x)\nonumber\\
\label{lolcofe}&-(-1)^N\sigma^{-2}\lambda^{2n}x^2B_{3-a_1-b_1,\ldots,3-a_m-b_m}B_{2-a_1-b_1,\ldots,2-a_m-b_m}p(x)=0.
\end{align}
We now make a change of variables to transform this differential equation to a Meijer $G$-function differential equation (see (\ref{meidiffeqn})).  To this end, let $y=\frac{\lambda^{2n}x^2}{2^{2n+N}\sigma^2}$.  Then, $x\frac{\mathrm{d}}{\mathrm{d}x}=2y\frac{\mathrm{d}}{\mathrm{d}y}$ and $p(y)$ satisfies the differential equation
\begin{align}&T_0^{N}B_{-\frac{a_1}{2},\ldots,-\frac{a_m}{2}}B_{-\frac{r_1}{2},\ldots,-\frac{r_n}{2}}B_{\frac{1-r_1}{2},\ldots,\frac{1-r_n}{2}}B_{\frac{1-a_1}{2},\ldots,\frac{1-a_m}{2}}p(y)\nonumber\\
\label{difnear}&\quad-(-1)^NyB_{\frac{3-a_1-b_1}{2},\ldots,\frac{3-a_m-b_m}{2}}B_{\frac{2-a_1-b_1}{2},\ldots,\frac{2-a_m-b_m}{2}}p(y)=0.
\end{align}
From (\ref{meidiffeqn}) it follows that a solution to (\ref{difnear}) is 
\begin{align*}p(y)=C&G^{2m+2n+N,0}_{2m,2m+2n+N}\bigg(y\; \bigg|\; \begin{matrix} \frac{a_1+b_1}{2},\ldots, \frac{a_m+b_m}{2},  \\
\frac{a_1}{2},\ldots,\frac{a_m}{2},  \end{matrix} \\
&\quad\quad\quad\quad\quad\begin{matrix}  \frac{a_1+b_1-1}{2},\ldots, \frac{a_m+b_m-1}{2} \\
 \frac{a_1-1}{2},\ldots,\frac{a_m-1}{2},\frac{r_1}{2},\ldots,\frac{r_n}{2},\frac{r_1-1}{2},\ldots,\frac{r_n-1}{2},0,\ldots,0 \end{matrix} \bigg),
\end{align*}
where $C$ is a constant.  Therefore, on changing variables, a solution to (\ref{lolcofe}) is given by
\begin{align*}p(x)=\widetilde{C}G^{2m+2n+N,0}_{2m,2m+2n+N}\bigg(&\frac{\lambda^{2n}x^2}{2^{2n+N}\sigma^2}\; \bigg| \; \begin{matrix} \frac{a_1+b_1}{2},\ldots, \frac{a_m+b_m}{2}, \\
\frac{a_1}{2},\ldots,\frac{a_m}{2}, \frac{a_1-1}{2},\ldots,\frac{a_m-1}{2},\end{matrix}\cdots \\
&\quad \cdots \begin{matrix} \frac{a_1+b_1-1}{2},\ldots, \frac{a_m+b_m-1}{2} \\
\frac{r_1}{2},\ldots,\frac{r_n}{2},\frac{r_1-1}{2},\ldots,\frac{r_n-1}{2},0,\ldots,0 \end{matrix} \bigg),
\end{align*}
where $\widetilde{C}$ is an arbitrary constant.  We can use the integration formula (\ref{meijergintegration2}) to determine a value of $\widetilde{C}$ such that $\int_{\mathbb{R}}p(x)\,\mathrm{d}x=1$.  With this choice of $\widetilde{C}$, $p(x)\geq 0$ and so $p$ is a density function.  However, there are $2m+2n+N$ linearly independent solutions to (\ref{lolcofe}) and whilst our solution $p$ is indeed a density function, a more detailed analysis would be required to rigorously prove that it is indeed the density function of the product beta-gamma-normal distribution.  Since a simple proof that $p$ is indeed the density function is now available to us via Mellin transforms, we decide to omit such an analysis.

\subsection{Proof of Theorem \ref{ngbpdfthm}}

Firstly, we define the Mellin transform and state some properties that will be useful to us.  The Mellin transform of a non-negative random variable $U$ with density $p$ is given by
\begin{equation*}M_U(s)=\mathbb{E}U^{s-1}=\int_0^{\infty}x^{s-1}p(x)\,\mathrm{d}x,
\end{equation*}
for all $s$ such that the expectation exists.  If the random variable $U$ has density $p$ that is symmetric about the origin then we can define the Mellin transform of $U$ by
\begin{equation*}M_U(s)=2\int_0^{\infty}x^{s-1}p(x)\,\mathrm{d}x.
\end{equation*}
The Mellin transform is useful in determining the distribution of products of independent random variables due to the property that if the random variables $U$ and $V$ are independent then 
\begin{equation}\label{melprod}M_{UV}(s)=M_U(s)M_V(s).
\end{equation}

\noindent \emph{Proof of Theorem \ref{ngbpdfthm}.}  It was shown by Springer and Thompson \cite{springer} that the Mellin transforms of $X$, $Y$ and $Z$ are
\begin{eqnarray*}M_X(s)&=& \prod_{j=1}^m\frac{\Gamma(a_j+b_j)}{\Gamma(a_j)}\frac{\Gamma(a_j-1+s)}{\Gamma(a_j+b_j-1+s)}, \\
M_Y(s)&=& \frac{1}{\lambda^{n(s-1)}}\prod_{j=1}^n\frac{\Gamma(r_j-1+s)}{\Gamma(r_j)}, \\
M_Z(s)&=&\frac{1}{\pi^{N/2}}2^{N(s-1)/2}\sigma^{s-1}\big[\Gamma\big(\tfrac{s}{2}\big)\big]^N.
\end{eqnarray*}
Then, as the random variables are independent, it follows from (\ref{melprod}) that
\begin{align}M_{XYZ}(s)&=\prod_{j=1}^m\frac{\Gamma(a_j+b_j)}{\Gamma(a_j)}\frac{\Gamma(a_j-1+s)}{\Gamma(a_j+b_j-1+s)}\times \frac{1}{\lambda^{n(s-1)}}\prod_{j=1}^n\frac{\Gamma(r_j-1+s)}{\Gamma(r_j)}\nonumber \\
\label{mel1}&\quad\times \frac{1}{\pi^{N/2}}2^{N(s-1)/2}\sigma^{s-1}\big[\Gamma\big(\tfrac{s}{2}\big)\big]^N.
\end{align}

Now, let $W$ be a random variable with density (\ref{ngbpdffor}).  Since the density of $W$ is symmetric about the origin, we have 
\begin{align}&M_W(s)=2\int_0^{\infty}x^{s-1}p(x)\,\mathrm{d}x\nonumber \\
&\quad=\frac{\lambda^n}{2^{2n+N/2}\pi^{(n+N)/2}\sigma}\prod_{j=1}^m\frac{\Gamma(a_j+b_j)}{2^{b_j}\Gamma(a_j)}\prod_{j=1}^n\frac{2^{r_j}}{\Gamma(r_j)} \times \bigg(\frac{2^{n+N/2}\sigma}{\lambda^n}\bigg)^s \times\big[\Gamma\big(\tfrac{s}{2}\big)\big]^N \nonumber\\
\label{mel2}&\quad\quad\times \prod_{j=1}^m\frac{\Gamma(\frac{a_j+s}{2})\Gamma\big(\frac{a_j-1+s}{2}\big)}{\Gamma\big(\frac{a_j+b_j+s}{2}\big)\Gamma\big(\frac{a_j+b_j-1+s}{2}\big)}\prod_{j=1}^n\Gamma\bigg(\frac{r_j+s}{2}\bigg)\Gamma\bigg(\frac{r_j-1+s}{2}\bigg),
\end{align}
where we used (\ref{meijergintegration2}) to compute the integral.  On applying the duplication formula $\Gamma(\frac{x}{2})\Gamma(\frac{x}{2}+\frac{1}{2})=2^{1-x}\sqrt{\pi}\Gamma(x)$ to (\ref{mel2}) we can deduce that the expressions (\ref{mel1}) and (\ref{mel2}) are equal.  Hence, the Mellin transforms of $W$ and $XYZ$ are equal and therefore $W$ and $XYZ$ are equal in distribution. \hfill $\square$

\appendix

\section{Further proofs}

\noindent\emph{Proof of Lemma \ref{appendixa1}.}  We begin by proving that there is at most one bounded solution to the $\mathrm{\mathrm{PG}}(r_1,r_2,\lambda)$ Stein equation (\ref{zxprod2gamma}).  Suppose $u$ and $v$ are bounded solutions to (\ref{zxprod2gamma}).  Define $w=u-v$.  Then $w$ is bounded and is a solution to the homogeneous equation
\begin{equation}\label{bcshcb}x^2w''(x)+(1+r_1+r_2)xw'(x)+(r_1r_2-\lambda^2x)w(x)=0.
\end{equation}
We now obtain the general solution to (\ref{bcshcb}).  We begin by noting that the general solution to the homogeneous equation
\[x^2s''(x)+xs'(x)-(x^2+(r_1-r_2)^2)s(x)=0\]
is given by $s(x)=CK_{r_1-r_2}(x)+DI_{|r_1-r_2|}(x)$ (see (\ref{realfeel})).  Here, we have used the fact that $K_{-\nu}(x)=K_{\nu}(x)$ for any $\nu\in\mathbb{R}$ and all $x>0$, which can be seen immediately from (\ref{kbesdef}).  Thus, $K_{|r_1-r_2|}(x)=K_{r_1-r_2}(x)$.  A simple change of variables now gives that $t(x)=CK_{r_1-r_2}(2\lambda\sqrt{x})+DI_{|r_1-r_2|}(2\lambda\sqrt{x})$ is the general solution to 
\begin{equation}\label{xbdsjc}x^2t''(x)+xt'(x)-(\lambda^2x+(r_1-r_2)^2/4)t(x)=0.
\end{equation}
Substituting $t(x)=x^{(r_1+r_2)/2}w(x)$ into (\ref{xbdsjc}) now shows that $w$ satisfies the differential equation (\ref{bcshcb}), and we have that the general solution to (\ref{bcshcb}) is given by
\[ w(x) = Aw_1(x)+Bw_2(x),\]
where 
\[w_1(x)=x^{-(r_1+r_2)/2} K_{r_1-r_2} (2\lambda\sqrt{x})\:\:\mbox{and}\:\: w_2(x)=x^{-(r_1+r_2)/2} I_{|r_1-r_2|} (2\lambda\sqrt{x}).\]
From the asymptotic formulas for modified Bessel functions (\ref{Ktend0}) and (\ref{roots}), it follows that in order to have a bounded solution we must take $A=B=0$, and thus $w=0$ and so there is at most one bounded solution to (\ref{zxprod2gamma}).

Since (\ref{zxprod2gamma}) is an inhomogeneous linear ordinary differential equation, we can use the method of variation of parameters (see Collins \cite{collins} for an account of the method) to write down the general solution of (\ref{zxprod2gamma}):
\begin{equation}\label{vargensoln}f(x)=-w_1(x)\int_a^x\frac{w_2(t)\tilde{h}(t)}{t^2W(t)}\,\mathrm{d}t+w_2(x)\int_b^x\frac{w_1(t)\tilde{h}(t)}{t^2W(t)}\,\mathrm{d}t,
\end{equation}
where $a$ and $b$ are arbitrary constants and $W(t)=W(w_1,w_2)=w_1w_2'-w_2w_1'$ is the Wronskian.  From the formula $W(K_\nu(x),I_\nu(x))=x^{-1}$ (Olver et al$.$ \cite{olver}, formula 10.28.2) and a simple computation we have $W(w_1(x),w_2(x))=\frac{1}{2}x^{-1-r_1-r_2}$.  Substituting the relevant quantities into (\ref{vargensoln}) and taking $a=b=0$ yields the solution (\ref{ink}).  That the solutions (\ref{ink}) and (\ref{pen}) are equal follows because $t^{(r_1-r_2)/2-1}K_{r_1-r_2}(2\lambda\sqrt{t})$ is proportional to the $\mathrm{PG}(r_1,r_2,\lambda)$ density function.

Finally, we show that the solution (\ref{ink}) is bounded if $h$ is bounded.  If $r_1\not=r_2$, then it follows from the asymptotic formulas for modified Bessel functions (see Appendix B.2.3) that the solution is bounded (here we check that the solution is bounded as $x\downarrow0$ using (\ref{ink}), and to verify that it is bounded as $x\rightarrow\infty$ we use (\ref{pen})).  If $r_1=r_2$, the same argument confirms that the solution is bounded as $x\rightarrow\infty$.  To deal with the limit $x\downarrow0$, we use the asymptotic formulas $I_0(x)\sim 1$ and $K_0(x)\sim-\log(x)$, as $x\downarrow0$, to obtain
\begin{align*}\lim_{x\downarrow0}|f(x)|&=\lim_{x\downarrow0}\frac{2}{x^{(r_1+r_2)/2}}\bigg|\int_0^xt^{(r_1+r_2)/2-1}\big[K_0(2\lambda\sqrt{x})I_0(2\lambda\sqrt{t})\\
&\quad-I_0(2\lambda\sqrt{x})K_0(2\lambda\sqrt{t})\big]\tilde{h}(t)\,\mathrm{d}t\bigg| \\
&=\lim_{x\downarrow0}\frac{1}{x^{(r_1+r_2)/2}}\int_0^xt^{(r_1+r_2)/2-1}\big[\log(x)-\log(t)\big]\tilde{h}(t)\,\mathrm{d}t \\
&\leq\|\tilde{h}\|\lim_{x\downarrow0}\frac{1}{x^{(r_1+r_2)/2}}\int_0^xt^{(r_1+r_2)/2-1}\big[\log(x)-\log(t)\big]\,\mathrm{d}t \\
&=\|\tilde{h}\|\lim_{x\downarrow0}\frac{1}{((r_1+r_2)/2)^2}=\frac{4\|\tilde{h}\|}{(r_1+r_2)^2}.
\end{align*}
Therefore the solution is bounded when $h$ is bounded.  This completes the proof.  \hfill $\square$

\vspace{3mm}

\noindent\emph{Proof of Proposition \ref{appendixa2}.}  In this proof, we make use of an iterative approach that first appeared D\"{o}bler \cite{dobler beta}, and was then developed further in D\"{o}bler et al$.$ \cite{dgv15}.  Denote the Stein operator for the $\mathrm{PG}(r_1,r_2,\lambda)$ distribution by $\mathcal{A}_{r_1,r_2,\lambda}f(x)$, so that the $\mathrm{PG}(r_1,r_2,\lambda)$ Stein equation is given by 
\begin{equation*}\mathcal{A}_{r_1,r_2,\lambda}f(x)=\tilde{h}(x).
\end{equation*}
Now, from the Stein equation (\ref{zxprod2gamma}) and a straightforward induction on $k$, we have that
\begin{align*}& x^2f^{(k+2)}(x)+(r_1+r_2+2k+1)xf^{(k+1)}(x)\\
&\quad+((r_1+k)(r_2+k)-\lambda^2x)f^{(k)}(x)=h^{(k)}(x)+k\lambda^2f^{(k-1)}(x),
\end{align*}   
which can be written as
\begin{equation*}\mathcal{A}_{r_1+k,r_2+k,\lambda}f^{(k)}(x)=h^{(k)}(x)+k\lambda^2 f^{(k-1)}(x).
\end{equation*}   
Now, by Lemma \ref{appendixa1}, there exists a constant $C_{r_1,r_2,\lambda}$ such that
\begin{equation*}\|f\|\leq C_{r_1,r_2,\lambda}\|\tilde{h}\|.
\end{equation*}
We also note that the test function $h'(x)+\lambda^2f(x)$ has mean zero with respect to the random variable $Y\sim \mathrm{PG}(r_1+1,r_2+1,\lambda)$, since
by the product gamma characterisation of Proposition \ref{zxgammachara11}, 
\begin{equation*}\mathbb{E}[h'(Y)+\lambda^2f(Y)]=\mathbb{E}[\mathcal{A}_{r_1+k,r_2+k,\lambda}f'(Y)]=0.
\end{equation*}
With these facts we therefore have that
\begin{align*}\|f'\|&\leq C_{r_1+1,r_2+1,\lambda}\|h'(x)+\lambda^2f(x)\| \leq C_{r_1+1,r_2+1,\lambda}\big(\|h'\|+\lambda^2\|f\|\big) \\
&\leq C_{r_1+1,r_2+1,\lambda}\big(\|h'\|+\lambda^2 C_{r_1,r_2,\lambda}\|\tilde{h}\|\big).
\end{align*}
Repeating this procedure then yields the bound (\ref{appenbound}), as required. \hfill $\square$

\section{Properties of the Meijer $G$-function and modified Bessel functions}

Here we define the Meijer $G$-function and modified Bessel functions and state some of their properties that are relevant to this paper.  For further properties of these functions see Luke \cite{luke} and Olver et al$.$ \cite{olver}.  

\subsection{The Meijer $G$-function}

\subsubsection{Definition}
The Meijer $G$-function is defined, for $z\in\mathbb{C}\setminus\{0\}$, by the contour integral:
\begin{align*}&G^{m,n}_{p,q}\bigg(z \; \bigg|\; {a_1,\ldots, a_p \atop b_1,\ldots,b_q} \bigg)\\
&\quad=\frac{1}{2\pi i}\int_{c-i\infty}^{c+i\infty}z^{-s}\frac{\prod_{j=1}^m\Gamma(s+b_j)\prod_{j=1}^n\Gamma(1-a_j-s)}{\prod_{j=n+1}^p\Gamma(s+a_j)\prod_{j=m+1}^q\Gamma(1-b_j-s)}\,\mathrm{d}s,
\end{align*}
where $c$ is a real constant defining a Bromwich path separating the poles of $\Gamma(s + b_j)$ from those of $\Gamma(1- a_j- s)$ and where we use the convention that the empty product is $1$.

\subsubsection{Basic properties}
The Meijer $G$-function is symmetric in the parameters $a_1,\ldots,a_n$; $a_{n+1},\ldots,a_p$; $b_1,\ldots,b_m$; and $b_{m+1},\ldots,b_q$.  Thus, if one the $a_j$'s, $j=n+1,\ldots,p$, is equal to one of the $b_k$'s, $k=1,\ldots,m$, the $G$-function reduces to one of lower order.  For example,
\begin{equation}\label{lukeformula}G_{p,q}^{m,n}\bigg(z \; \bigg| \;{a_1,\ldots,a_{p-1},b_1 \atop b_1,\ldots,b_q}\bigg)=G_{p-1,q-1}^{m-1,n}\bigg(z \; \bigg| \;{a_1,\ldots,a_{p-1} \atop b_2,\ldots,b_q}\bigg), \quad m,p,q\geq 1.
\end{equation}
The $G$-function satisfies the identity
\begin{equation}\label{meijergidentity}z^cG_{p,q}^{m,n}\bigg(z \; \bigg| \;{a_1,\ldots,a_p \atop b_1,\ldots,b_q}\bigg)=G_{p,q}^{m,n}\bigg(z \; \bigg| \;{a_1+c,\ldots,a_p+c \atop b_1+c,\ldots,b_q+c}\bigg).
\end{equation}

\subsubsection{Asymptotic expansion}For $x>0$,
\begin{equation}\label{asymg}G^{q,0}_{p,q}\bigg(x \; \bigg|\; {a_1,\ldots, a_p \atop b_1,\ldots,b_q} \bigg)\sim \frac{(2\pi)^{(\sigma-1)/2}}{\sigma^{1/2}}x^\theta \exp\big(-\sigma x^{1/\sigma}\big), \quad \text{as $x\rightarrow\infty$,}
\end{equation}
where $\sigma=q-p$ and
\begin{equation*}\theta=\frac{1}{\sigma}\bigg\{\frac{1-\sigma}{2}+\sum_{i=1}^qb_i-\sum_{i=1}^pa_i\bigg\}.
\end{equation*}

\subsubsection{Integration}
\begin{equation}\label{meijergintegration1}\int_0^{\infty}\mathrm{e}^{\omega x}G_{p,q}^{m,n}\bigg(\alpha x \; \bigg| \;{a_1,\ldots,a_p \atop b_1,\ldots,b_q}\bigg)\,\mathrm{d}x=\omega^{-1}G_{p+1,q}^{m,n+1}\bigg(\frac{\alpha}{\omega} \; \bigg| \;{0,a_1,\ldots,a_p \atop b_1,\ldots,b_q}\bigg).
\end{equation}
For the conditions under which this formula holds see Luke \cite{luke}, pp$.$ 166--167.

For $\alpha>0$, $\gamma>0$, $a_j<1$ for $j=1,\ldots,n$, and $b_j>-\frac{1}{2}$ for $j=1,\ldots,m$, we have
\begin{align}\label{meijergintegration}&\int_0^{\infty}\cos(\gamma x)G_{p,q}^{m,n}\bigg(\alpha x^2 \; \bigg| \;{a_1,\ldots,a_p \atop b_1,\ldots,b_q}\bigg)\,\mathrm{d}x\nonumber\\
&\quad=\sqrt{\pi}\gamma^{-1}G_{p+2,q}^{m,n+1}\bigg(\frac{4\alpha}{\gamma^2} \; \bigg| \;{\frac{1}{2},a_1,\ldots,a_p,0 \atop b_1,\ldots,b_q}\bigg).
\end{align}

The following formula follows from Luke \cite{luke}, formula (1) of section 5.6 and a change of variables:
\begin{align}\label{meijergintegration2}&\int_0^{\infty}x^{s-1}G_{p,q}^{m,n}\bigg(\alpha x^2 \; \bigg| \;{a_1,\ldots,a_p \atop b_1,\ldots,b_q}\bigg)\,\mathrm{d}x\\
&\quad=\frac{\alpha^{-s/2}}{2}\frac{\prod_{j=1}^m\Gamma(b_j+\frac{s}{2})\prod_{j=1}^n\Gamma(1-a_j-\frac{s}{2})}{\prod_{j=m+1}^q\Gamma(1-b_j-\frac{s}{2})\prod_{j=n+1}^p\Gamma(a_j+\frac{s}{2})}.
\end{align}
For the conditions under which this formula is valid see Luke, pp$.$ 158--159.  In particular, the formula is valid when $n=0$, $1\leq p+1\leq m\leq q$ and $\alpha>0$.

\subsubsection{Differential equation}
The $G$-function $f(z)=G^{m,n}_{p,q}\big(z\big|{a_1,\ldots,a_{p}, \atop b_1,\ldots,b_q}\big)$ satisfies the differential equation
\begin{equation}\label{meidiffeqn}(-1)^{p-m-n}zB_{1-a_1,\ldots,1-a_p}f(z)-B_{-b_1,\ldots,-b_q}f(z)=0,
\end{equation}
where $B_{r_1,\ldots,r_n}f(z)=T_{r_n}\cdots T_{r_1}f(z)$ for $T_rf(z)=zf'(z)+rf(z)$.

\subsection{Modified Bessel functions}

\subsubsection{Definitions}
The \emph{modified Bessel function of the first kind} of order $\nu \in \mathbb{R}$ is defined, for all $x\in\mathbb{R}$, by
\begin{equation*}\label{defI}I_{\nu} (x) = \sum_{k=0}^{\infty} \frac{1}{\Gamma(\nu +k+1) k!} \left( \frac{x}{2} \right)^{\nu +2k}.
\end{equation*}
The \emph{modified Bessel function of the second kind} of order $\nu\in\mathbb{R}$ is defined, for $x>0$, by
\begin{equation}\label{kbesdef}K_\nu(x)=\int_0^\infty \mathrm{e}^{-x\cosh(t)}\cosh(\nu t)\,\mathrm{d}t.
\end{equation}

\subsubsection{Representation in terms of the Meijer $G$-function}
\begin{eqnarray*}I_\nu(x)&=&i^{-\nu}G_{0,2}^{2,0}\bigg(-\frac{x^2}{4}\;\bigg|\;\frac{\nu}{2},-\frac{\nu}{2}\bigg), \quad x\in\mathbb{R}, \\
K_\nu(x)&=&\frac{1}{2}G_{0,2}^{2,0}\bigg(\frac{x^2}{4}\;\bigg|\;\frac{\nu}{2},-\frac{\nu}{2}\bigg), \quad x>0.
\end{eqnarray*} 

\subsubsection{Asymptotic expansions}
\begin{eqnarray}\label{Itend0}I_{\nu} (x) &\sim& \frac{1}{\Gamma(\nu +1)} \left(\frac{x}{2}\right)^{\nu}, \quad x \downarrow 0, \nonumber \\
\label{Ktend0}K_{\nu} (x) &\sim& \begin{cases} 2^{|\nu| -1} \Gamma (|\nu|) x^{-|\nu|}, &  x \downarrow 0, \: \nu \not= 0, \\
-\log x, &  x \downarrow 0, \: \nu = 0, \end{cases} \\
\label{roots} I_{\nu} (x) &\sim& \frac{\mathrm{e}^x}{\sqrt{2\pi x}}, \quad x \rightarrow \infty, \\
\label{Ktendinfinity} K_{\nu} (x) &\sim& \sqrt{\frac{\pi}{2x}} \mathrm{e}^{-x}, \quad x \rightarrow \infty. \nonumber
\end{eqnarray}

\subsubsection{Differential equation}
The modified Bessel differential equation is 
\begin{equation} \label{realfeel} x^2 f''(x) + xf'(x) - (x^2 +\nu^2)f(x) =0.\end{equation}
The general solution is $f(x)=AI_{\nu} (x) +BK_{\nu} (x).$

\section*{Acknowledgements} The author acknowledges support from EPSRC
grant EP/K032402/1 and is currently supported by a Dame Kathleen Ollerenshaw Research Fellowship.  The author would like to thank the anonymous referees for carefully reading this article and for their helpful comments and suggestions. The author would also like to thank Gesine Reinert for helpful discussions.


\begin{thebibliography}{99}

\bibitem{aaps16} Arras, B., Azmoodeh, E., Poly, G. and Swan, Y.  Stein's method on the second Wiener chaos : 2-Wasserstein distance. arXiv:1601:03301, 2016.

\bibitem{barbour2} Barbour, A. D. Stein's method for diffusion approximations.  \emph{Probab. Theory Rel.} $\mathbf{84}$ (1990), pp. 297--322.

\bibitem{chatterjee} Chatterjee, S., Fulman, J. and  R\"ollin, A. Exponential approximation by Stein's method and spectral graph theory.
\emph{ALEA Lat. Am. J. Probab. Math. Stat.} $\mathbf{8}$ (2011), pp. 197--223.

\bibitem{chen 0} Chen, L. H. Y.  Poisson approximation for dependent trials.  \emph{Ann. Probab.} $\mathbf{3}$ (1975), pp. 534--545.

\bibitem{collins} Collins, P. J.  \emph{Differential and Integral Equations.}  Oxford University Press, 2006.

\bibitem{diaconis} Diaconis, P. and Zabell, S.  Closed Form Summation for Classical Distributions: Variations on a Theme of De Moivre. \emph{Statist. Sci.} $\mathbf{6}$ (1991), pp. 284--302.

\bibitem{dobler2} D\"{o}bler, C.  Stein's method for the half-normal distribution with applications to limit theorems related to the simple random walk.   \emph{ALEA Lat. Am. J. Probab. Math. Stat.} $\mathbf{12}$ (2015), pp. 171--191.

\bibitem{dobler beta} D\"{o}bler, C. Stein's method of exchangeable pairs for the beta distribution and generalizations. \emph{Electron. J. Probab.} $\mathbf{20}$ no. 109 (2015), pp. 1--34.

\bibitem{dgv15} D\"{o}bler, C., Gaunt R. E. and Vollmer, S. J.  An iterative technique for bounding derivatives of solutions of Stein equations. arXiv:1510:02623, 2015.

\bibitem{eichelsbacher} Eichelsbacher, P. and Th\"{a}le, C.  Malliavin-Stein method for Variance-Gamma approximation on Wiener space.  	\emph{Electron. J. Probab.} $\mathbf{20}$ no. 123 (2015), pp. 1--28.

\bibitem{epstein} Epstein, B. Some applications of the Mellin transform in statistics. \emph{Ann. Math.
Stat.} $\mathbf{19}$ (1948), pp. 370--379.

\bibitem{folland} Folland, G.  \emph{Real Analysis: Modern Techniques and Their Applications.}  Wiley, New York, 1984.

\bibitem{gaunt pn} Gaunt, R. E. On Stein's method for products of normal random variables and zero bias couplings. To appear in \emph{Bernoulli}, 2016+.

\bibitem{gaunt vg} Gaunt, R. E.  Variance-Gamma approximation via Stein's method.  \emph{Electron. J. Probab.} $\mathbf{19}$ no. 38 (2014), pp. 1--33.

\bibitem{gaunt gh} Gaunt, R. E.  A Stein characterisation of the generalized hyperbolic distribution. arXiv:1603:05675, 2016.

\bibitem{gaunt} Gaunt, R. E.  Uniform bounds for expressions involving modified Bessel functions.  \emph{Math. Inequal. Appl.} $\mathbf{19}$ (2016), pp. 1003--1012.

\bibitem{gaunt chi square} Gaunt, R. E., Pickett, A. and Reinert, G.  Chi-square approximation by Stein's method with application to Pearson's statistic.  To appear in \emph{Ann. Appl. Probab.}, 2016+.

\bibitem{goldstein3} Goldstein, L. and Reinert, G.  Distributional transformations, orthogonal polynomials, and Stein characterisations.  \emph{J. Theoret. Probab.} $\mathbf{18}$ (2005), pp. 237--260.

\bibitem{goldstein4} Goldstein, L. and Reinert, G.  Stein's method for the Beta distribution and the P\'{o}lya-Eggenberger Urn.  \emph{J. Appl. Probab.} $\mathbf{50}$ (2013), pp. 1187--1205.

\bibitem{gotze} G\"{o}tze, F.  On the rate of convergence in the multivariate CLT.  \emph{Ann. Probab.} $\mathbf{19}$ (1991), pp. 724--739.

\bibitem{hp10} Hashorva, E. and Pakes, A. G. Tail asymptotics under beta random scaling.  \emph{J. Math. Anal. Appl.} $\mathbf{372}$ (2010), pp. 496-514.

\bibitem{hp14} Hermann, F. and Pfaffelhuber, P.  Large-scale behaviour of the partial duplication random graph.  \emph{ALEA Lat. Am. J. Probab. Math. Stat.} $\mathbf{13}$ (2016), pp. 687--710.

\bibitem{kreyszig} Kreyszig, E. \emph{Advanced Engineering Mathematics,} 8th ed. Wiley, 1999.

\bibitem{ley} Ley, C., Reinert, G. and Swan, Y.  Stein's method for comparison of univariate distributions. arXiv:1408.2998, 2014.

\bibitem{luk} Luk, H.  \emph{Stein's Method for the Gamma Distribution and Related Statistical Applications.}  PhD thesis, University of Southern California, 1994. 

\bibitem{luke} Luke, Y. L. \emph{The Special Functions and their Approximations, Vol. 1}, Academic Press, New York, 1969.

\bibitem{malik} Malik, H. J. Exact distribution of the product of independent generalized gamma variables with the same shape parameter. \emph{Ann. Stat.} $\mathbf{39}$ (1968), pp. 1751-–1752.

\bibitem{nk05} Nadarajah, S. and Kotz, S. On the product and ratio of gamma and beta random variables.  \emph{Allg. Stat. Arch.} $\mathbf{89}$ (2005), pp. 435-449.

\bibitem{nourdin1} Nourdin, I. and Peccati, G. Stein's method on Wiener chaos. \emph{Probab. Theory Rel.} $\mathbf{145}$ (2011), pp. 75--118.

\bibitem{olver} Olver, F. W. J., Lozier, D. W., Boisvert, R. F. and Clark, C. W.  \emph{NIST Handbook of Mathematical
Functions.} Cambridge University Press, 2010.

\bibitem{pekoz1} Pek\"oz, E. and R\"ollin, A. New rates for exponential approximation and the theorems of R\'{e}nyi and Yaglom. \emph{Ann. Probab.} $\mathbf{39}$ (2011), pp. 587--608.

\bibitem{pekoz} Pek\"oz, E., R\"ollin, A. and Ross, N. Degree asymptotics with rates for preferential attachment random graphs. \emph{Ann. Appl. Probab.} $\mathbf{23}$ (2013), pp. 1188--1218.

\bibitem{pekoz3} Pek\"oz, E., R\"ollin, A. and Ross, N.  Generalized gamma approximation with rates for urns, walks and trees.  \emph{Ann. Probab.} $\mathbf{44}$ (2016), pp. 1776-1816.

\bibitem{pike} Pike, J. and Ren, H. Stein's method and the Laplace distribution. \emph{ALEA Lat. Am. J. Probab. Math. Stat.} $\mathbf{11}$ (2014), pp. 571-587.

\bibitem{pitman} Pitman, J. and Racz, M. Z. Beta-gamma tail asymptotics. \emph{Electron. Comm. Probab.} $\mathbf{20}$ no. 84 (2015), pp. 1--7.

\bibitem{schoutens} Schoutens, W.  Orthogonal polynomials in Stein's method.  \emph{J. Math. Anal. Appl.} $\mathbf{253}$ (2001), pp. 515--531.

\bibitem{springer66} Springer, M. D. and Thompson, W. E.  The distribution of independent random variables.  \emph{SIAM J. Appl. Math.} $\mathbf{14}$ (1966), pp. 511--526.

\bibitem{springer} Springer, M. D. and Thompson, W. E.  The distribution of products of Beta, Gamma and Gaussian random variables.  \emph{SIAM J. Appl. Math.} $\mathbf{18}$ (1970), pp. 721--737.

\bibitem{stein} Stein, C.  A bound for the error in the normal approximation to the the distribution of a sum of dependent random variables.  In \emph{Proc. Sixth Berkeley Symp. Math. Statis. Prob.} (1972), vol. 2, Univ. California Press, Berkeley, pp. 583--602.

\bibitem{stein2}Stein, C.  \emph{Approximate Computation of Expectations.} IMS, Hayward, California, 1986.

\bibitem{stein3} Stein, C., Diaconis, P., Holmes, S. and Reinert, G.  Use of exchangeable pairs in the analysis of simulations. In Stein's method: expository lectures and applications, volume 46 of IMS Lecture Notes Monogr. Ser., Inst. Math. Statist., Beachwood, OH (2004), pp. 1--26.

\end{thebibliography}
\end{document}